\newtheorem{theorem}{Theorem}
\newtheorem{proposition}{Proposition}
\newtheorem{lemma}{Lemma}
\newtheorem{defn}{Definition}
\newtheorem{example}{Example}
\newtheorem{prob}{Problem}
\newtheorem{rem}{Remark}
\newtheorem{assump}{Assumption}
\newtheorem{experiment}{Experiment}
\newcommand{\abs}[1]{\left\lvert{#1}\right\rvert}
\newcommand{\norm}[1]{\left\lVert#1\right\rVert}
\newcommand{\pmat}[1]{\begin{pmatrix}#1\end{pmatrix}}
\newcommand{\R}{\mathbb{R}}
\newcommand{\N}{\mathbb{N}}
\newcommand{\Svec}{\mathcal{S}}
\newcommand{\PP}{\mathbb{P}}
\newcommand{\EE}{\mathbb{E}}
\newcommand{\D}{\mathcal{D}}
\renewcommand{\P}{\mathcal{P}}
\newcommand{\is}{i_s}
\newcommand{\iu}{i_u}
\DeclareMathOperator{\minimize}{minimize}
\DeclareMathOperator{\sbjto}{subject\;to}
\newcommand{\diag}{\text{diag}}
\title[]{Design of periodic scheduling and control for\\networked systems under random data loss}
\author[A.\ Kundu]{Atreyee Kundu}
\author[D.\ E.\ Quevedo]{Daniel E. Quevedo}
\thanks{Atreyee Kundu is with the Department of Electrical Engineering, Indian Institute of Science Bangalore, India. Daniel E.\ Quevedo is with the School of Electrical Engineering and Robotics, Queensland University of Technology, Australia. Emails: \texttt{atreyeek@iisc.ac.in, dquevedo@ieee.org}}
\keywords{Networked Control Systems, Scheduling sequence, Data losses, Stability, Markovian jump linear systems, Linear Matrix Inequalities}
\date{\today}
\begin{document}

    \begin{abstract}
    	{This paper deals with Networked Control Systems (NCSs) whose shared networks have limited communication capacity and are prone to data losses. We assume that among \(N\) plants, only \(M\:(< N)\) plants can communicate with their controllers at any time instant. In addition, a control input, at any time instant, is lost in a channel with a probability \(p\). Our contributions are threefold. First, we identify necessary and sufficient conditions on the open-loop and closed-loop dynamics of the plants that ensure existence of purely time-dependent periodic scheduling sequences under which stability of each plant is preserved for all admissible data loss signals. Second, given the open-loop and closed-loop dynamics of the plants, relevant parameters of the shared network and a period for the scheduling sequence, we present an algorithm that verifies our stability conditions and if satisfied, designs stabilizing scheduling sequences. Otherwise, the algorithm reports non-existence of a stabilizing periodic scheduling sequence with the given period and stability margins. Third, given the plant matrices, the parameters of the network and a period for the scheduling sequence, we present an algorithm that designs static state-feedback controllers such that our stability conditions are satisfied. The main apparatus for our analysis is a switched systems representation of the individual plants in an NCS whose switching signals are time-inhomogeneous Markov chains. Our stability conditions rely on the existence of sets of symmetric and positive definite matrices that satisfy certain (in)equalities.}
    \end{abstract}
\maketitle
\section{Introduction}
\label{s:intro}
\subsection{Problem setting}
\label{ss:prob_setting}
	Networked Control Systems (NCSs) are spatially distributed control systems in which the communication between plants and their controllers occurs through shared networks. NCSs find wide applications in sensor networks, remote surgery, haptics collaboration over the internet, automated highway systems, unmanned aerial vehicles, etc. \cite{Hespanha2007}. While the use of shared communication networks in NCSs offers flexible architectures and reduced installation and maintenance costs, the exchange of information between the plants and their controllers often suffers from network induced limitations and uncertainties.

    In this paper we deal with NCSs whose communication networks have limited bandwidth and are prone to data losses. Examples of communication networks with limited bandwidth include wireless networks (an important component of smart home, smart transportation, smart city, remote surgery, platoons of autonomous vehicles, etc.) and underwater acoustic communication systems \cite{quevedo2020}. The scenario in which the number of plants sharing a communication network is higher than the capacity of the network is called \emph{medium access constraint}. This scenario motivates a need to allocate the communication network to each plant in a manner so that good qualitative properties of the plants are preserved. This task of efficient allocation of a shared communication network is commonly referred to as a \emph{scheduling problem}, and the corresponding allocation scheme is called a \emph{scheduling {sequence}}. Under ideal communication, both a plant and its controller receive the intended information whenever the shared network is allocated to them. However, in practical situations, communication networks are often prone to uncertainties like intermittent data losses. In particular, data loss is a common feature for noisy communication networks. For instance, in cloud-aided vehicle control systems, the control values are computed remotely and transmitted to the vehicles over noisy wireless networks. The interference and fading effects in the noisy network often lead to data losses \cite{Mishra2018}. This {aspect} further leads to the requirement of designing scheduling {sequences} that preserve good qualitative properties of the plants {in the presence of data losses}. Our objective is to address this design challenge. {Typically, the existence of a favourable scheduling sequence depends not only on the parameters of the shared network but also on the plant dynamics. This feature motivates the problem of designing controllers for the plants such that the plants and the communication network together admit a desired scheduling sequence. We also address the design of static state-feedback controllers such that certain good qualitative properties of all plants in an NCS are preserved under scheduling.}
\subsection{Prior works}
\label{ss:lit_survey}
    The existing classes of scheduling sequences can be classified broadly into two categories: \emph{static (also called {periodic}, {fixed}, or {open-loop})} and \emph{dynamic (also called {non-periodic}, or {closed-loop})}. In case of the former, a finite length allocation scheme of the network is determined offline and is applied eternally in a periodic manner, while in case of the latter, the allocation of the shared network is determined based on some information about the plant (e.g., states, outputs, access status of sensors and actuator, etc.). For NCSs with continuous-time linear plants, static scheduling sequences that preserve stability of all plants under ideal communication, are characterized using common Lyapunov functions in \cite{Hristu2001} and piecewise Lyapunov-like functions with average dwell time switching in \cite{Lin2005}. A more general case of co-designing a static scheduling sequence and control action is addressed for ideal communication using combinatorial optimization with periodic control theory in \cite{Rehbinder2004} and for delayed communication using Linear Matrix Inequalities (LMIs) optimization with average dwell time technique in \cite{Dai2010}. The authors of \cite{Zhang2006} characterize static scheduling sequences that ensure reachability and observability of the plants under limited but ideal communication, and design an observer-based feedback controller for these sequences. The corresponding techniques were later extended to the case of constant transmission delays \cite{Hristu2008} and Linear Quadratic Gaussian control \cite{Hristu_Zhang2008}. Event-triggered dynamic scheduling sequences that preserve stability of all plants under communication delays are proposed in \cite{Al-Areqi'15}. In \cite{Quevedo2014} the authors propose a mechanism to allocate network resources by finding optimal node that minimizes a certain cost function in every network time instant. The design of dynamic scheduling sequences for stability of each plant under both communication uncertainties and computational limitations is studied in \cite{Saha2015}. In \cite{Gatsis2016} a class of distributed control-aware random network access sequences for sensors such that all control loops are stabilizable, is presented. {A reinforcement learning based sensor scheduling sequence for Cyber-Physical Systems is presented in \cite{Leong2020}}. A dynamic scheduling sequence based on predictions of both control performance and channel quality at run-time, is proposed in \cite{Ma2019}. {In \cite{Liu2019} the authors present a model predictive control scheme for scheduling and control co-design of NCSs.}

    {Periodic scheduling sequences are easier to implement, often near optimal, and guarantee activation of each sensor and actuator, see \cite{Peters'16,Longo, Hristu'05} for detailed discussions. They are preferred for safety-critical control systems \cite[\S2.5.1]{Longo}. It is also observed in \cite{Orihuela'14, Peters'16} that periodic phenomenon appears in non-periodic schedules. Recently in \cite{quevedo2020} we employed a blend of multiple Lyapunov-like functions and graph theory to design stability preserving periodic scheduling sequences under ideal communication. In this paper we study periodic scheduling sequences under communication uncertainties.}
\subsection{Our contributions}
\label{ss:contri}
    We consider an NCS consisting of multiple discrete-time linear plants whose feedback loops are closed through a shared communication network, {a pictorial representation is given in Figure \ref{fig:ncs}}. We assume that the plants are unstable in open-loop and stable {when controlled} in closed-loop. Due to a limited communication capacity of the network, only a {subset of the} plants can exchange information with their controllers at any instant of time. Consequently, the remaining plants operate in open-loop {potentially leading to instability}. In addition, the communication network is prone to data losses. In particular, at any time instant, the control input is lost in a channel with a known probability. If the control input is lost in a channel at a time instant, then the plant accessing that channel at that instant also operates in open-loop.
    \begin{figure}[htbp]
    	\begin{center}
	\scalebox{0.6}{
	\begin{tikzpicture}[every path/.style={>=latex},base node/.style={draw,rectangle, scale = 1.4}]
	\node[base node] (a) at (-2,5) {Controller 1};
	\node[base node] (b) at (3.5,4) {Plant 1};
	\node[base node] (c) at (-2,2) {Controller 2};
	\node[base node] (d) at (3.5,1) {Plant 2};
	\node[base node] (e) at (-2,-2) {Controller N};
	\node[base node] (f) at (3.5,-3) {Plant N};	
	
	\draw (-4.5 ,5) edge (a);
	\draw (-4.5,5) edge (-4.5,4);
	\draw[->] (-4.5,4) -- (0,4);
	\draw (a) edge (0.4,5);
	\draw[->] (b) -- (5.5,4);
	\draw (5.5,4) edge (5.5,5);
	\draw[->] (1,4) -- (b);
	\draw[-.] (1,4) -- (0.4,4.5);
	\draw (5.5,5) edge (1.1,5);
	\draw[-.] (1.1,5) -- (0.6,5.5);

	\draw (-4.5 ,2) edge (c);
	\draw (-4.5,2) edge (-4.5,1);
	\draw[->] (-4.5,1) -- (0,1);
	\draw (c) edge (0.4,2);
	\draw[->] (d) -- (5.5,1);
	\draw (5.5,1) edge (5.5,2);
	\draw[->] (1,1) -- (d);
	\draw[-.] (1,1) -- (0.4,1.5);
	\draw (5.5,2) edge (1.1,2);
	\draw[-.] (1.1,2) -- (0.6,2.5);

	\draw (-4.5 ,-2) edge (e);
	\draw (-4.5,-2) edge (-4.5,-3);
	\draw[->] (-4.5,-3) -- (0,-3);
	\draw (e) edge (0.4,-2);
	\draw[->] (f) -- (5.5,-3);
	\draw (5.5,-3) edge (5.5,-2);
	\draw[->] (1,-3) -- (f);
	\draw[-.] (1,-3) -- (0.4,-2.5);
	\draw (5.5,-2) edge (1.1,-2);
	\draw[-.] (1.1,-2) -- (0.6,-1.5);

	\draw[dashed] (-0.4,-4) -- (-0.4,6);
	\draw[dashed] (2,-4) -- (2,6);
	\draw[dashed] (-0.4,-4) -- (2,-4);
	\draw[dashed] (-0.4,6) -- (2,6);

	\node (g) at (0.75,-4.5) {Communication network};
	\node (h) at (3.5,-0.5) {\(\vdots\)};

	\end{tikzpicture}
	}
	\caption{Block diagram of NCS}\label{fig:ncs}
	\end{center}
    \end{figure}
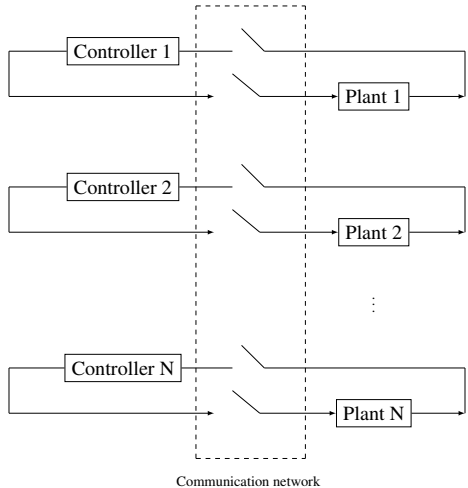

    {Our first objective is to design periodic scheduling sequences that
preserve exponential second moment stability (ESMS) of each plant in the NCS.} We model the individual plants of an NCS
as switched systems, whose subsystems are the open-loop (unstable mode) and closed-loop (stable mode) operations of the
plants and the switching signals are time-inhomogeneous Markov chains. The open-loop operation of a plant occurs at the
time instants when it does not have access to the shared network, or when it has access to the shared network but the control
input is lost in transit. The closed-loop operation occurs at the time instants when the plant has access to the shared network
and the control input is received. Clearly, a switching signal is governed by both the scheduling sequence and the data loss
signal. We design scheduling sequences in a manner such that each switching signal is stabilizing under all data loss signals. Towards this end, we employ the following steps:
	{\begin{itemize}[label = \(\circ\), leftmargin = *]
        \item First, we apply off-the-shelf stability conditions for switched linear systems whose switching signals are time-inhomogeneous Markov chains with periodic transition probability matrices from \cite{Fang2002} to arrive at necessary and sufficient conditions involving the open-loop and closed-loop dynamics of the plants, the capacity of the network and the probability of data loss, under which a periodic scheduling sequence is stabilizing. {The proposed conditions rely on the existence of sets of symmetric and positive definite matrices that satisfy certain equalities.}
        \item Second, given the open-loop and closed-loop dynamics of the plants, the capacity of the shared network, the probability of data loss, a period for the scheduling sequence, we present an algorithm that verifies our stability conditions by solving a set of feasibility problems that involves LMIs. If these feasibility problems admit solutions, then the algorithm designs stabilizing periodic scheduling sequences. Otherwise, the algorithm reports non-existence of a stabilizing periodic scheduling sequence with the period under consideration.
    \end{itemize}}
    {Our second objective is to design static state-feedback controllers for the individual plants in an NCS such that the plants and the shared communication network together satisfy our stability conditions. Towards this end, we employ the following step:
    \begin{itemize}[label = \(\circ\), leftmargin = *]
    	  {\item Given the open-loop dynamics of the plants, the parameters of the network, a period for the scheduling sequence, we present an algorithm that designs state-feedback controllers such that the set of feasibility problems, mentioned above, admits solutions. The algorithm involves solving certain sets of matrix inequalities to determine sets of matrices with some symmetric and positive definite elements. The state-feedback controllers are designed as functions of these matrices.}
   \end{itemize}
    {Our design of controllers is with respect to a scheduling sequence design mechanism fixed a priori and hence, unlike \cite{Rehbinder2004,Dai2010}, {we} do not address ``co''-design of scheduling sequences and feedback controllers.}
    }

    {Our results are demonstrated on a set of numerical experiments.}
\subsection{Paper organization}
\label{ss:paper_org}
    The remainder of this paper is organized as follows: in \S\ref{s:prob_stat} we formulate the problem under consideration. The apparatus for our design of scheduling sequences and analysis of stability are described in \S\ref{s:prelims}. Our {main technical} results appear in \S\ref{s:mainres}. We also describe various features of our results in this section. Numerical experiments are presented in \S\ref{s:num_ex}. We conclude in \S\ref{s:concln} with a brief discussion of future research directions. {Proofs of our results are presented in a consolidated manner in \S\ref{s:proofs}.}
\subsection{Notation}
\label{ss:notation}
    Standard notations and terminologies are employed throughout the paper. \(\N\) is the set of natural numbers, \(\N_{0} = \N\cup\{0\}\), and \(\R\) is the set of real numbers. For a finite set \(A\), we employ \(\abs{A}\) to denote its cardinality, i.e., the number of elements in \(A\). Two finite sets \(A\) and \(B\) are distinct, if there exists at least one element \(i\) such that \(i\in A\) but \(i\notin B\), or vice-versa. For \(a\in\R\), \(\lfloor a \rfloor\) denotes the largest integer less than or equal to \(a\). For symmetric block matrices, \(\star\) acts as ellipsis for the terms that are introduced by symmetry. \(\diag\{M_1,M_2,\ldots,M_n\}\) denotes a block-diagonal matrix with diagonal elements \(M_1\), \(M_2,\ldots\), \(M_n\). We will operate in a probability space \((\Omega,\mathcal{F},\PP)\), where \(\Omega\) is the sample space, \(\mathcal{F}\) is the \(\sigma\)-algebra of events and \(\PP\) is the probability measure.
\section{Problem statement}
\label{s:prob_stat}
    We consider an NCS with \(N\) plants whose dynamics are given by
    \begin{align}
    \label{e:plant_i}
        x_{i}(t+1) = A_{i}x_{i}(t) + B_{i}u_{i}(t),\:\:x_{i}(0) = x_{i}^{0},\:\:t\in\N_{0},
    \end{align}
    where \(x_{i}(t)\in\R^{d}\) and \(u_{i}(t)\in\R^{m}\) are the vectors of states and inputs of the \(i\)-th plant at time \(t\), respectively, \(i=1,2,\ldots,N\). Each plant \(i\) employs a state-feedback controller, \(u_{i}(t) = K_{i}x_{i}(t)\), \(t\in\N_{0}\). The matrices, \(A_{i}\in\R^{d\times d}\), \(B_{i}\in\R^{d\times m}\) and \(K_{i}\in\R^{m\times d}\), \(i=1,2,\ldots,N\), are constant.

    \begin{assump}
    \label{a:stability}
    \rm{
        The open-loop dynamics of each plant is unstable and each controller is stabilizing. More specifically, the matrices, \(A_{i}+B_{i}K_{i}\), \(i=1,2,\ldots,N\), are Schur stable and the matrices, \(A_{i}\), \(i=1,2,\ldots,N\), are unstable.\footnote{A matrix \(A\in\R^{d\times d}\) is Schur stable if all its eigenvalues are inside the open unit disk. We call \(A\) unstable if it is not Schur stable.}
    }
    \end{assump}

    The controllers are remotely located and each plant communicates with its controller through a shared communication network. We consider that the shared network has the following properties:
    \begin{itemize}[label = \(\circ\), leftmargin = *]
        \item It has a limited communication capacity in the sense that at any time instant, only \(M\) plants \((0<M<N)\) can access the network. Consequently, the remaining \(N-M\) plants operate in open-loop.
        \item The communication channels from the controllers to the plants are prone to data losses. In particular, at any time instant, the control input is lost in a channel with probability \(p\). The plant accessing this channel at that instant also operates in open-loop.
    \end{itemize}

    In view of Assumption \ref{a:stability} and the properties of the shared communication network, each plant in \eqref{e:plant_i} operates in two modes:
    \begin{enumerate}[label = (\alph*), leftmargin = *]
        \item stable mode (closed-loop operation) when the plant has access to the shared communication network and its control input is received, and
        \item unstable mode (open-loop operation) when the plant does not have access to the shared communication network, or it has access to the shared network but its control input is lost in the channel.
    \end{enumerate}

    We let \(\is\) and \(\iu\) denote the mode of operation of the \(i\)-th plant when it has access to the shared network and its control input is received (stable) and when it has access to the shared network but its control input is lost or when it does not have access to the shared network (unstable), respectively, \(A_{\is} = A_i+B_iK_i\) and \(A_{\iu} = A_i\), \(i=1,2,\ldots,N\). Let \(\kappa_{j}:\N_{0}\to\{0,1\}\) denote the data loss signal at the \(j\)-th channel of the communication network, \(j=1,2,\ldots,M\). If \(\kappa_{j}(t) = 0\), then the plant accessing the \(j\)-th channel at time \(t\) receives its control input, and if \(\kappa_{j}(t) = 1\), then the plant accessing the \(j\)-th channel at time \(t\) operates in open-loop. We have
    \begin{align}
    \label{e:data_loss}
        \kappa_{j}(t) &=
        \begin{cases}
            1,\:\:\text{with probability}\:p,\\
            0,\:\:\text{with probability}\:1-p,
        \end{cases}
        \:\:t\in\N_{0}.
    \end{align}
    Let
    \[
        \Svec = \{s\in\{1,2,\ldots,N\}^{M}\:|\:\text{the elements of \(s\) are distinct}\}
    \]
    be the set of vectors that consist of \(M\) distinct elements from the set \(\{1,2,\ldots,N\}\). We call a function \(\gamma:\N_{0}\to\Svec\) that specifies, at every time \(t\), \(M\) plants of the NCS which access the shared network at that time, as a \emph{scheduling sequence}. {We shall restrict our attention to scheduling sequences that are purely time-dependent in the sense that the choice of \(\gamma(t) = s\in\Svec\) at any instant of time \(t\) depends solely on \(t\) and is not governed by any information about the plants (e.g., states, outputs, access status of sensors and actuators, etc.) or the shared network (e.g., history of data losses, etc.). In addition, we shall focus on the class of \(\gamma\) that admits a periodic structure, the mathematical definition of which will follow momentarily.}

    Let \(R_i = \{\is,\iu\}\), \(i=1,2,\ldots,N\). The dynamics of each plant \(i\in\{1,2,\ldots,N\}\) under scheduling and data loss can be modelled as follows:
    \begin{align}
    \label{e:plant_model}
        x_{i}(t+1) = A_{\sigma_{i}(t)}x(t),\:\:x_i(0) = x_{i}^{0},\:\:\sigma_i(t)\in R_i,\:\:t\in\N_{0}.
    \end{align}
    Notice that \eqref{e:plant_model} is a switched linear system whose set of subsystems is \(R_i\) and a switching signal \(\sigma_i:\N_0\to R_i\) satisfies
    \begin{align}
    \label{e:sw_sig}
        \sigma_i(t) =
        \begin{cases}
            \is,&\:\:\text{if \(i\) is an element of \(\gamma(t)\) and the channel that \(i\) is accessing does not suffer}\\&\:\:\text{from data loss at time \(t\)},\\
            \iu,&\:\:\text{if \(i\) is not an element of \(\gamma(t)\), or \(i\) is an element of \(\gamma(t)\) and the channel that}\\&\:\:\text{\(i\) is accessing suffers from data loss at time \(t\).}
        \end{cases}
    \end{align}
    We observe that  \(\sigma_i\) is a Markov chain, defined on \((\Omega,\mathcal{F},\PP)\), taking values in \(R_i\) with transition probability matrix, \(\Pi_i(t) = (p_{k\ell})_{k,\ell\in R_i}\), \(t\in\N_0\) and initial probability distribution, \(\Phi_{i0} = \pmat{\phi_{i0}(\is) & \phi_{i0}(\iu)}\). Here,
    \begin{align*}
        p_{\is\is}(t) &= \PP(\sigma_i(t) = \is\:|\:\sigma_i(t-1) = \is) =
        \begin{cases}
            1-p,&\:\:\text{if \(i\) is an element of \(\gamma(t)\)},\\
            0,&\:\:\text{if \(i\) is not an element of \(\gamma(t)\)},\\
        \end{cases}
    \end{align*}
    \begin{align*}
        p_{\is\iu}(t) &= \PP(\sigma_i(t) = \iu\:|\:\sigma_i(t-1) = \is) =
        \begin{cases}
            p,&\:\:\text{if \(i\) is an element of \(\gamma(t)\)},\\
            1,&\:\:\text{if \(i\) is not an element of \(\gamma(t)\)},\\
        \end{cases}
    \end{align*}
    \begin{align*}
        p_{\iu\is}(t) &= \PP(\sigma_i(t) = \is\:|\:\sigma_i(t-1) = \iu) =
        \begin{cases}
            1-p,&\:\:\text{if \(i\) is an element of \(\gamma(t)\)},\\
            0,&\:\:\text{if \(i\) is not an element of \(\gamma(t)\)},\\
        \end{cases}
    \end{align*}
    \begin{align*}
        p_{\iu\iu}(t) &= \PP(\sigma_i(t) = \iu\:|\:\sigma_i(t-1) = \iu) =
        \begin{cases}
            p,&\:\:\text{if \(i\) is an element of \(\gamma(t)\)},\\
            1,&\:\:\text{if \(i\) is not an element of \(\gamma(t)\)},\\
        \end{cases}
    \end{align*}
    and
    \begin{align*}
        \phi_{i0}(\is) &= \PP(\sigma_i(0) = \is) =
        \begin{cases}
            1-p,&\:\:\text{if \(i\) is an element of \(\gamma(0)\)},\\
            0,&\:\:\text{if \(i\) is not an element of \(\gamma(0)\)},
        \end{cases}
    \end{align*}
    \begin{align*}
        \phi_{i0}(\iu) &= \PP(\sigma_i(0) = \iu) =
        \begin{cases}
            p,&\:\:\text{if \(i\) is an element of \(\gamma(0)\)},\\
            1,&\:\:\text{if \(i\) is not an element of \(\gamma(0)\)}.
        \end{cases}
    \end{align*}

    \begin{lemma}
    \label{lem:auxres1}
        Fix \(i\in\{1,2,\ldots,N\}\). Then \(\Pi_i(t)\) and \(\Phi_{i0}\) are well-defined.
    \end{lemma}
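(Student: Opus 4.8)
The plan is to verify that $\Pi_i(t)$ is a genuine (row-)stochastic matrix and that $\Phi_{i0}$ is a genuine probability vector, i.e., that every entry lies in $[0,1]$, that each row of $\Pi_i(t)$ sums to $1$, and that the two entries of $\Phi_{i0}$ sum to $1$. The nonnegativity is immediate: since $p$ is a probability we have $p\in[0,1]$ and $1-p\in[0,1]$, so every quantity occurring in the four displayed expressions for $p_{k\ell}(t)$ and in $\Phi_{i0}$ lies in $\{0,\,p,\,1-p,\,1\}\subseteq[0,1]$.

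For the unit row-sums I would split along the dichotomy in \eqref{e:sw_sig}. If $i$ is an element of $\gamma(t)$, then the $\is$-row of $\Pi_i(t)$ reads $(1-p)+p=1$ and the $\iu$-row reads $(1-p)+p=1$; if $i$ is not an element of $\gamma(t)$, both rows read $0+1=1$. The same two-case check applied at $t=0$ gives $\phi_{i0}(\is)+\phi_{i0}(\iu)=1$ according as $i\in\gamma(0)$ or $i\notin\gamma(0)$. This settles the purely algebraic content.

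The step I expect to be the genuine obstacle is making sense of the conditional-probability definitions themselves, together with the asserted Markov structure. The issue is that when $i\notin\gamma(t-1)$ we have $\PP(\sigma_i(t-1)=\is)=0$, so the events conditioned upon in $p_{\is\is}(t)$ and $p_{\is\iu}(t)$ are null and the ratio $\PP(\,\cdot\cap\cdot\,)/\PP(\,\cdot\,)$ is a priori undefined. I would resolve this by observing, from \eqref{e:sw_sig}, that $\sigma_i(t)$ is a deterministic function of the purely time-dependent value $\gamma(t)$ and of the single data-loss outcome $\kappa_{j}(t)$ on the channel $j$ accessed by plant $i$ at time $t$, and that by \eqref{e:data_loss} this outcome is independent of the history $\sigma_i(0),\dots,\sigma_i(t-1)$. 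Hence, for every $k\in R_i$ with $\PP(\sigma_i(t-1)=k)>0$ one has $\PP(\sigma_i(t)=\ell\mid\sigma_i(t-1)=k)=\PP(\sigma_i(t)=\ell)$, a quantity depending on $t$ only through $\gamma(t)$ and not on $k$. Because these values are independent of the source state, setting $p_{k\ell}(t):=\PP(\sigma_i(t)=\ell)$ for all $k\in R_i$ furnishes a canonical, consistent extension to the null cases, agrees with the stated entries whenever the conditioning event is non-null, renders $\Pi_i(t)$ unambiguous, and makes the Markov property hold trivially. Together with the nonnegativity and row-sum verifications, this shows $\Pi_i(t)$ and $\Phi_{i0}$ are well-defined.
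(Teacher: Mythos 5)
Your proposal is correct, and its first half coincides exactly with the paper's entire proof: the paper simply writes out \(\Pi_i(t)\) and \(\Phi_{i0}\) in the two cases (\(i\) an element of \(\gamma(t)\) or not), observes that all entries are non-negative and that each row of \(\Pi_i(t)\) sums to \(1\) (likewise the entries of \(\Phi_{i0}\)), and stops there. Where you go beyond the paper is in the second half: you notice that when \(i\notin\gamma(t-1)\) the event \(\{\sigma_i(t-1)=\is\}\) is null, so the entries \(p_{\is\is}(t)\) and \(p_{\is\iu}(t)\), read literally as conditional probabilities, are \(0/0\) expressions, and you repair this by exploiting that \(\sigma_i(t)\) is a deterministic function of the (deterministic) value \(\gamma(t)\) and the fresh data-loss outcome \(\kappa_j(t)\), which is independent of the past, so that the transition probabilities do not depend on the source state and admit a canonical extension \(p_{k\ell}(t):=\PP(\sigma_i(t)=\ell)\) agreeing with the stated table on all non-null conditioning events. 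This null-event issue is arguably the genuine mathematical content of ``well-defined,'' and the paper silently glosses over it; your treatment is therefore more complete. One caveat: the independence you invoke (\(\kappa_j(t)\) independent of the history \(\sigma_i(0),\ldots,\sigma_i(t-1)\)) is not literally stated in \eqref{e:data_loss}, which only fixes the marginal Bernoulli law at each instant --- temporal independence of the loss process is an implicit modeling assumption of the paper, and it would be worth flagging that you are using it, since without it the displayed transition probabilities (and the Markov property itself) need not hold.
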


    {At this point, it is worth noting that the model for \(\sigma_i\) described above is, in principle, a controlled Markov chain, which under a purely time-dependent scheduling sequence $\gamma$, admits the properties of a time-inhomogeneous Markov chain. In particular, for a periodic scheduling sequence $\gamma$, we obtain a time-inhomogeneous Markov chain with periodic transition probability matrix \cite[Chapter 1]{Suhov}, see also Lemma \ref{lem:auxres2}. }

    \begin{example}
    \label{ex:model}
    \rm{
        Let \(N = 2\) and \(M = 1\). Suppose that
        \(
            \gamma = \pmat{1},\pmat{2},\pmat{1},\pmat{2},\pmat{1},\ldots.
        \)
        Then the dynamics of plant \(1\) can be modelled as
        \(
            x_1(t+1) = A_{\sigma_1(t)}x_1(t),
        \)
        where \(\sigma_1\) is a Markov chain with
        \begin{align*}
            \Pi_1(t) &=
            \begin{cases}
                \pmat{1-p & p\\1-p & p},&\:\:t=2,4,6,\ldots,\\
                \pmat{0 & 1\\0 & 1},&\:\:t=1,3,5,\ldots,
            \end{cases}
        \end{align*}
        and initial distribution \(\Phi_{10} = \pmat{1-p & p}\). The dynamics of plant \(2\) can be modelled as
        \(
            x_2(t+1) = A_{\sigma_2(t)}x_2(t),
        \)
        where \(\sigma_2\) is a Markov chain with
        \begin{align*}
            \Pi_2(t) &=
            \begin{cases}
                \pmat{1-p & p\\1-p & p},&\:\:t=1,3,5,\ldots,\\
                \pmat{0 & 1\\0 & 1},&\:\:t=2,4,6,\ldots,
            \end{cases}
        \end{align*}
        and initial distribution \(\Phi_{20} = \pmat{0 & 1}\).
    }
    \end{example}

    \begin{defn}
    \label{d:stability}
    \rm{
        The \(i\)-th plant in the NCS is \emph{exponentially second moment stable (ESMS)} under a scheduling sequence \(\gamma\), if for any initial state \(x_i(0)\in\R^{d}\) and any admissible initial distribution \(\Phi_{i0}\), there exist constants {\(\alpha_i,\beta_i>0\)}, independent of \(x_i(0)\) and \(\Phi_{i0}\), such that
        \begin{align}
        \label{e:stability}
            {\EE\{\norm{x_i(t)}^{2}\}\leq\alpha_i\norm{x_i(0)}^{2}e^{-\beta_i t}\:\:\text{for all}\:t\in\N_0}.
        \end{align}
        }
    \end{defn}

    We will first solve the following problem:
    \begin{prob}
    \label{prob:main}
    \rm{
        Given the matrices, \(A_{i}\), \(B_{i}\), \(K_{i}\), \(i=1,2,\ldots,N\), the capacity of the network, \(M\),  and the probability of data loss, \(p\), design a {purely time-dependent periodic} scheduling sequence, \(\gamma\), that ensures stability of each plant \(i\) in \eqref{e:plant_i} in the sense of Definition \ref{d:stability}.
        }
    \end{prob}

    In the sequel we will call a \(\gamma\) that is a solution to Problem \ref{prob:main} as a stabilizing scheduling sequence. {Purely time-dependent periodic scheduling sequences are easier to implement compared to scheduling sequences that rely on information about the plants (e.g., states, outputs, access status of sensors and actuators, etc.) and/or the shared network (e.g., history of data losses, etc.). We will demonstrate stabilizing properties of such sequences. In particular,} towards solving Problem \ref{prob:main}, we will employ the following steps: First, we identify necessary and sufficient conditions involving the matrices, \(A_{\is}\), \(A_{\iu}\), \(i=1,2,\ldots,N\), the capacity of the network, \(M\), and the data loss probability, \(p\), under which there exists a stabilizing periodic scheduling sequence. Second, we present an algorithm that verifies our stability conditions for pre-specified period and if satisfied, designs such a sequence. Otherwise, the algorithm reports non-existence of a stabilizing periodic scheduling sequence with the given period. {We will then solve the following problem:
     \begin{prob}
    \label{prob:next}
    \rm{
        Given the matrices, \(A_{i}\), \(B_{i}\), \(i=1,2,\ldots,N\), the capacity of the network, \(M\), the probability of data loss, \(p\), and a period, \(\ell\) for the scheduling sequence, design state-feedback controller matrices \(K_i\), \(i=1,2,\ldots,N\) such that there exists a {purely time-dependent periodic} scheduling sequence, \(\gamma\), that ensures stability of each plant \(i\) in \eqref{e:plant_i} in the sense of Definition \ref{d:stability}.
        }
    \end{prob}
   Towards solving Problem \ref{prob:next}, we present an algorithm that designs \(K_i\), \(i=1,2,\ldots,N\) based on the given set of information. Our algorithm relies on sufficient conditions on the matrices \(A_i,B_i\), the capacity of the network, \(M\), the probability of data loss, \(p\), and the period, \(\ell\) of a scheduling sequence for the existence of \(K_i\), and no solution to this algorithm does not imply the non-existence of state-feedback controllers such that the plants and the shared network together admit a stabilizing periodic scheduling sequence.}

   Prior to presenting our results, we catalog a set of preliminaries that our solutions to Problems \ref{prob:main} and \ref{prob:next} will rely on.
\section{Preliminaries}
\label{s:prelims}
    \begin{defn}
    \label{d:periodic-sequence}
    \rm{
        A scheduling sequence, \(\gamma\), is \emph{periodic} if there exists \(\ell\in\N\) such that \(\gamma(t) = \gamma(t+\ell)\) for all \(t\in\N_0\). We call \(\ell\) to be the \emph{period} of \(\gamma\).
    }
    \end{defn}
    \begin{defn}
    \label{d:periodic-mat}
    \rm{
        Consider \(i\in\{1,2,\ldots,N\}\). The transition probability matrix, \(\Pi_i\), is \emph{periodic} if there exists \(m\in\N\) such that \(\Pi_i(t) = \Pi_i(t+m)\) for all \(t\in\N\). We call \(m\) to be the period of \(\Pi_i\).
        }
    \end{defn}
    \begin{lemma}
    \label{lem:auxres2}
         The following are equivalent:
         \begin{enumerate}[label = (\roman*), leftmargin = *]
            \item A scheduling sequence, \(\gamma\), is periodic with period \(\ell\).
            \item For each plant \(i\in\{1,2,\ldots,N\}\), the transition probability matrix, \(\Pi_i\), is periodic with period \(\ell\).
         \end{enumerate}
    \end{lemma}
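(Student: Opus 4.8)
The plan is to reduce everything to a single scalar observation about the matrix entries displayed just before the lemma. For a fixed plant $i$, the four formulas for $p_{\is\is}(t)$, $p_{\is\iu}(t)$, $p_{\iu\is}(t)$, $p_{\iu\iu}(t)$ show that $\Pi_i(t)$ is completely determined by the single Boolean datum ``$i$ is an element of $\gamma(t)$''; explicitly it takes only the two values
\[
\Pi_i(t)=\pmat{1-p & p\\ 1-p & p}\ \text{when } i\in\gamma(t),\qquad \Pi_i(t)=\pmat{0 & 1\\ 0 & 1}\ \text{when } i\notin\gamma(t).
\]
I would therefore phrase the whole argument in terms of the membership indicators $t\mapsto\mathbf 1[i\in\gamma(t)]$, so that periodicity of $\gamma$ and periodicity of the $\Pi_i$ both become statements about these indicators.

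For (i)$\Rightarrow$(ii) the argument is essentially immediate: if $\gamma(t)=\gamma(t+\ell)$ as elements of $\Svec$, then in particular $i\in\gamma(t)$ iff $i\in\gamma(t+\ell)$, and the central observation gives $\Pi_i(t)=\Pi_i(t+\ell)$ for every $i$ and every $t$. For the converse (ii)$\Rightarrow$(i) I would run the same observation backwards: provided the two candidate matrices above are distinct, the equality $\Pi_i(t)=\Pi_i(t+\ell)$ forces $\mathbf 1[i\in\gamma(t)]=\mathbf 1[i\in\gamma(t+\ell)]$; applying this for each $i\in\{1,\ldots,N\}$ recovers the entire set of plants scheduled at time $t$ and shows it coincides with the set scheduled at time $t+\ell$, from which I would conclude $\gamma(t)=\gamma(t+\ell)$. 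The boundary index $t=0$, which sits outside the range $t\in\N$ of Definition~\ref{d:periodic-mat}, I would treat separately through the initial distribution $\Phi_{i0}$, whose entry $\phi_{i0}(\is)$ encodes $\mathbf 1[i\in\gamma(0)]$ by the very same dichotomy, yielding $\gamma(0)=\gamma(\ell)$.

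I expect the reverse direction to be the main obstacle, and for two specific reasons that the proof must address head-on. First, the distinctness of the two transition matrices is needed to read off the membership bit from $\Pi_i(t)$: they coincide exactly when $1-p=0$, so the recovery step, and with it the equivalence, genuinely requires $p\neq1$ (when $p=1$ both matrices collapse to $\pmat{0&1\\0&1}$, every $\Pi_i$ is trivially periodic, and $\gamma$ is unconstrained). Second, and more delicately, $\Pi_i$ carries only the membership bit and no information about which channel a scheduled plant occupies, so the family $\{\Pi_i(t)\}_{i}$ determines only the \emph{set} of scheduled plants and not the ordered tuple $\gamma(t)\in\Svec$; the reconstruction therefore relies on the convention that an element of $\Svec$ is identified with its underlying set of entries (equivalently, that the statement is read up to channel relabellings, under which all $\Pi_i$ are invariant). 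Once these two points are fixed, the remainder is a routine indicator-chasing over $t$.
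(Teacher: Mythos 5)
Your core argument is the same as the paper's: its entire proof is the one-line observation that both periodicities are equivalent to periodicity of the access-status pattern \(t\mapsto\mathbf{1}[i\in\gamma(t)]\), which is exactly your reduction, and both your forward direction and the membership-recovery step in the converse are fine. The two caveats you isolate are real, and the paper passes over both silently: when \(p=1\) the two candidate matrices coincide and \(\Pi_i\) carries no scheduling information at all, and since \(\gamma(t)\in\Svec\) is an ordered \(M\)-tuple while the family \(\{\Pi_i(t)\}_{i=1}^{N}\) determines only the underlying set of scheduled plants, (ii)\(\Rightarrow\)(i) is literally false without your set-identification convention (e.g.\ \(N=3\), \(M=2\), \(\gamma\) alternating between \((1,2)\) and \((2,1)\): every \(\Pi_i\) is constant, hence periodic with period \(1\), but \(\gamma\) is not periodic with period \(1\)). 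Making these explicit is a genuine sharpening of the paper's one-sentence proof.

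The one step of yours that does not go through as written is the boundary repair at \(t=0\). Reading \(\phi_{i0}(\is)\) does recover \(\mathbf{1}[i\in\gamma(0)]\), but hypothesis (ii) constrains only \(\Pi_i(t)\) for \(t\in\N\) and places no constraint whatsoever on \(\Phi_{i0}\), so nothing links the membership bit at time \(0\) to the one at time \(\ell\): the claimed conclusion ``yielding \(\gamma(0)=\gamma(\ell)\)'' is a non sequitur. Concretely, take \(N=2\), \(M=1\), \(\gamma(0)=(1)\) and \(\gamma(t)=(2)\) for all \(t\geq 1\); then every \(\Pi_i\) is constant on \(\N\) (hence periodic with any period) and the \(\Phi_{i0}\) are perfectly well defined, yet \(\gamma(0)\neq\gamma(\ell)\). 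The honest fixes are either to read \(\Pi_i(t)\) as determined by the access status at time \(t\) for all \(t\in\N_0\) (consistent with the paper's declaration ``\(\Pi_i(t)\), \(t\in\N_0\)'') and require periodicity over \(\N_0\) rather than \(\N\), or to weaken (i) to periodicity of \(\gamma\) restricted to \(t\geq 1\). To be fair, the paper's own proof has the identical defect --- it asserts equivalence of the access status at \(t\) and \(t+\ell\) ``for all \(t\in\N_0\)'' although Definition~\ref{d:periodic-mat} quantifies only over \(t\in\N\) --- so this is as much a looseness in the lemma as in your argument; but since you explicitly flagged \(t=0\) and then claimed to resolve it via \(\Phi_{i0}\), that resolution as stated is the gap.
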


    We recall
    \begin{theorem}{\cite[Theorem 2.3]{Fang2002}}
    \label{t:recall_res}
    \rm{
        Consider a plant \(i\). Suppose that the transition probability matrix, \(\Pi_i(t)\), is periodic with period \(\ell\). Then the following are equivalent:
        \begin{enumerate}[label = \roman*), leftmargin = *]
            \item For some symmetric and positive definite matrices, \(Q_{i,1}(j)\), \(Q_{i,2}(j),\ldots\), \(Q_{i,\ell}(j)\), \(j\in R_i\), there exist symmetric and positive definite matrices, \(P_{i,1}(j)\), \(P_{i,2}(j),\ldots\), \(P_{i,\ell}(j)\), \(j\in R_i\), such that
                \begin{align}
                    \label{e:maincondn1}\sum_{j\in R_i}p_{kj}(\tau)A_k^\top P_{i,\tau+1}(j)A_k - P_{i,\tau}(k) &= -Q_{i,\tau}(k),\:\:\tau = 1,2,\ldots,\ell-1,\:\:k\in R_i,\\
                    \intertext{and}
                    \label{e:maincondn2}\sum_{j\in R_i}p_{kj}(\ell)A_k^\top P_{i,1}(j)A_k-P_{i,\ell}(k) &= -Q_{i,\ell}(k),\:\:k\in R_i.
                \end{align}
            \item The plant \(i\) is ESMS.
        \end{enumerate}
    }
    \end{theorem}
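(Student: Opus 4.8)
Since this statement is quoted from \cite{Fang2002}, one option is simply to cite it; I nevertheless sketch a self-contained argument, as it is the standard Lyapunov characterization of mean-square stability for a Markovian jump linear system (MJLS). The plan is to read \eqref{e:plant_model} as an MJLS and to interpret \eqref{e:maincondn1}--\eqref{e:maincondn2} as its coupled \emph{periodic} Lyapunov equations. Because \(\Pi_i\) has period \(\ell\), the right device is to introduce a phase variable \(\tau(t)\in\{1,\ldots,\ell\}\) that cycles through the period and to pass to the augmented process \((\sigma_i(t),\tau(t))\), which is \emph{time-homogeneous} on \(R_i\times\{1,\ldots,\ell\}\). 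Under this lifting the family \(\{P_{i,\tau}(k)\}\) is a single tuple of Lyapunov matrices indexed by the augmented mode, and the assertion becomes the classical equivalence ``mean-square stability \(\Leftrightarrow\) solvability of the coupled Lyapunov equations'' for a homogeneous MJLS. I would prove (i)\(\Rightarrow\)(ii) first and then the converse.

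For (i)\(\Rightarrow\)(ii), the idea is to turn the \(P\) matrices into a stochastic Lyapunov function. Put \(V(t,x,k)=x^\top P_{i,\tau(t)}(k)x\). Conditioning on \((x_i(t),\sigma_i(t))=(x,k)\) and using \eqref{e:plant_model} with the transition probabilities \(p_{kj}(\tau)\), a one-step computation gives
\[
\EE\bigl\{V(t+1,x_i(t+1),\sigma_i(t+1))\,\big|\,x_i(t)=x,\sigma_i(t)=k\bigr\}-V(t,x,k)=-x^\top Q_{i,\tau(t)}(k)x,
\]
which is precisely \eqref{e:maincondn1}--\eqref{e:maincondn2}. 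As there are finitely many indices, positive definiteness of the \(Q_{i,\tau}(k)\) yields a uniform \(\lambda>0\) with \(Q_{i,\tau}(k)\succeq\lambda I\), and the \(P_{i,\tau}(k)\) are uniformly bounded between positive multiples of \(I\). Taking total expectations, telescoping the drift over one full period, and iterating across successive periods produces \(\EE\{V(t,\cdot,\cdot)\}\leq c\,\mu^{\lfloor t/\ell\rfloor}\,\norm{x_i(0)}^2\) for some \(\mu\in(0,1)\); the two-sided bounds on \(P\) then convert this into the exponential estimate \eqref{e:stability}, i.e.\ ESMS.

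For (ii)\(\Rightarrow\)(i), I would construct the matrices directly. Write \eqref{e:maincondn1}--\eqref{e:maincondn2} as \((I-\mathcal{L})\mathbf{P}=\mathbf{Q}\), where \(\mathcal{L}\) is the positive linear operator on tuples of symmetric matrices whose \((\tau,k)\) component is \(\sum_{j\in R_i}p_{kj}(\tau)A_k^\top(\cdot)_{\tau+1}(j)A_k\) (indices read cyclically modulo \(\ell\)), and \(\mathbf{P},\mathbf{Q}\) collect the \(P_{i,\tau}(k),Q_{i,\tau}(k)\). The spectral radius of \(\mathcal{L}\) coincides with that of the forward second-moment operator \(\mathcal{T}\) of the lifted homogeneous chain. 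ESMS is equivalent to \(\rho(\mathcal{T})<1\), so \((I-\mathcal{L})\) is invertible and \(\mathbf{P}=\sum_{s\geq0}\mathcal{L}^{s}\mathbf{Q}\) converges; since \(\mathbf{Q}\succ0\) and \(\mathcal{L}\) preserves positivity, the limit \(\mathbf{P}\) is symmetric and positive definite and solves \eqref{e:maincondn1}--\eqref{e:maincondn2}.

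The main obstacle is the converse, and within it the equivalence between the \emph{trajectory-level} definition of ESMS in \eqref{e:stability} and the \emph{operator-spectral} condition \(\rho(\mathcal{T})<1\): one must show that geometric decay of \(\EE\{\norm{x_i(t)}^2\}\) for all initial states and all admissible \(\Phi_{i0}\) forces \(\rho(\mathcal{T})<1\), and conversely, while carefully handling the time-inhomogeneity. The lifting to a homogeneous chain is what makes this tractable, but verifying that the uniform two-sided bounds and the stationarity/shift arguments survive the lifting is the delicate part; everything else is the routine Lyapunov bookkeeping indicated above.
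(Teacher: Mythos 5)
The paper contains no proof of Theorem \ref{t:recall_res} to compare against: it is imported verbatim from \cite[Theorem 2.3]{Fang2002} and used as an off-the-shelf ingredient (its proof section covers only Lemmas \ref{lem:auxres1}--\ref{lem:auxres2}, Theorem \ref{t:mainres} and the two propositions), so your first option --- simply citing it --- is exactly what the authors do. Judged on its own terms, your sketch follows the standard MJLS route and is sound in outline; the lifting of the period-\(\ell\) inhomogeneous chain to a homogeneous chain on \(R_i\times\{1,\ldots,\ell\}\) is the textbook device, and it buys you the classical homogeneous equivalence rather than redoing the periodic case by hand as in \cite{Fang2002}. Three points deserve attention. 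First, in the direct part the telescoping over a period is unnecessary: once \(Q_{i,\tau}(k)\succeq\lambda I\) and \(\alpha I\preceq P_{i,\tau}(k)\preceq\beta I\) hold uniformly (finitely many matrices), the drift identity already gives the one-step contraction \(\EE\{V(t+1)\}\leq(1-\lambda/\beta)\,\EE\{V(t)\}\); do also check that the time index of \(p_{kj}(\cdot)\) in your conditional-expectation computation matches the convention of \eqref{e:maincondn1}--\eqref{e:maincondn2}, since the dynamics are \(x_i(t+1)=A_{\sigma_i(t)}x_i(t)\) while \(\Pi_i(t)\) governs the step from \(\sigma_i(t-1)\) to \(\sigma_i(t)\) --- one line of bookkeeping, but easy to get off by one. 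Second, the obstacle you flag in the converse is real and, in this paper's setting, concrete: Definition \ref{d:stability} quantifies only over \emph{admissible} \(\Phi_{i0}\), which here can be degenerate (e.g., \(\pmat{0 & 1}\)), and some augmented modes \((\is,\tau)\) are never occupied when plant \(i\) is not scheduled at phase \(\tau\); trajectory-level decay therefore does not directly yield decay of the second moment conditioned on those modes, which is what convergence of your Neumann series at every \((\tau,k)\) requires. The gap closes either because Fang--Loparo's stability notion quantifies over arbitrary initial distributions (point masses on each mode are then available), or, in the present model, because both rows of \(\Pi_i(t)\) coincide, so one step out of an unoccupied mode re-enters the occupied part of the chain and a periodicity/shift argument finishes the job --- but one of these remarks must actually be supplied. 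Third, positive definiteness of the series solution is immediate since the \(s=0\) term gives \(\mathbf{P}\succeq\mathbf{Q}\succ 0\), and \(\rho(\mathcal{L})=\rho(\mathcal{T})\) because \(\mathcal{L}\) is the adjoint of the forward second-moment operator. With those items filled in, your argument is a legitimate self-contained substitute for the citation.
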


    Lemma \ref{lem:auxres2} ensures the equivalence between periodicity of a scheduling sequence, \(\gamma\), and periodicity of transition probability matrix, \(\Pi_i\), for each plant \(i\), while Theorem \ref{t:recall_res} provides necessary and sufficient conditions for ESMS of plant \(i\) with a periodic transition probability matrix. We will employ Lemma \ref{lem:auxres2} and Theorem \ref{t:recall_res} in our design of stabilizing periodic scheduling sequences.

    We are now in a position to present our solutions to Problems \ref{prob:main} and \ref{prob:next}.
\section{Results}
\label{s:mainres}
\subsection{Design of stabilizing periodic scheduling sequences}
\label{ss:results_set1}
	{We first identify necessary and sufficient conditions on the plant dynamics that ensure existence of periodic scheduling sequences under which stability of each plant is preserved for all admissible data loss signals.}
    \begin{theorem}
    \label{t:mainres}
        Consider an NCS described in \S\ref{s:prob_stat}. There exists a periodic scheduling sequence, \(\gamma\), with period \(\ell\in\N\), that ensures ESMS of each plant \(i\in\{1,2,\ldots,N\}\), if and only if there exist distinct sets \(\D_q\subset\{1,2,\ldots,N\}\), \(q=1,2,\ldots,\ell\), that satisfy the following conditions:
        \begin{enumerate}[label = \((C\arabic*)\), leftmargin = *]
            \item\label{mainprop1} \(\abs{\D_q} = M\), \(q=1,2,\ldots,\ell\),
            \item\label{mainprop2} \(\displaystyle{\bigcup_{q=1}^{\ell}\D_q} = \{1,2,\ldots,N\}\),
            \item\label{mainprop3} each element \(i\in\D_q\), \(q=1,2,\ldots,\ell\), satisfies that for some symmetric and positive definite matrices \(Q_{i,1}(j)\), \(Q_{i,2}(j),\ldots\), \(Q_{i,\ell}(j)\), \(j\in R_i\), there exist symmetric and positive definite matrices \(P_{i,1}(j)\), \(P_{i,2}(j),\ldots\), \(P_{i,\ell}(j)\), \(j\in R_i\), such that
                \begin{align}
                    \label{e:condn1}(1-p)A_{k}^\top P_{i,\tau+1}(\is)A_k + pA_k^\top P_{i,\tau+1}(\iu)A_k - P_{i,\tau}(k) &= -Q_{i,\tau}(k),\:\:\text{if}\:i\in\D_{\tau+1},\:\tau=1,2,\ldots,\ell-1,\:k\in R_i,\\
                    \label{e:condn2}A_k^\top P_{i,\tau+1}(\iu)A_k - P_{i,\tau}(k) &= -Q_{i,\tau}(k),\:\:\text{if}\:i\notin\D_{\tau+1},\:\tau = 1,2,\ldots,\ell-1,\:k\in R_i,
                \end{align}
                and
                \begin{align}
                    \label{e:condn3}(1-p)A_k^\top P_{i,1}(\is)A_k + pA_k^\top P_{i,1}(\iu)A_k - P_{i,\ell}(k) &= -Q_{i,\ell}(k),\:\:\text{if}\:i\in\D_1,\:k\in R_i,\\
                    \label{e:condn4}A_k^\top P_{i,1}(\iu)A_k - P_{i,\ell}(k) &= -Q_{i,\ell}(k),\:\:\text{if}\:i\notin\D_1,\:k\in R_i.
                \end{align}
        \end{enumerate}
    \end{theorem}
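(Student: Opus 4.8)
The plan is to reduce the claim to a plant-by-plant application of Theorem~\ref{t:recall_res}, after setting up a correspondence between periodic scheduling sequences of period $\ell$ and ordered $\ell$-tuples of index sets $(\D_1,\ldots,\D_\ell)$ satisfying \textup{(C1)}. Concretely, I would identify $\D_q$ with the unordered set of plants scheduled at time $q-1$, i.e.\ the entries of $\gamma(q-1)$, using the wrap-around convention $\gamma(0)=\gamma(\ell)$ supplied by periodicity. Condition \textup{(C1)} then merely restates that each $\gamma(t)\in\Svec$ consists of $M$ distinct plants, and conversely any tuple satisfying \textup{(C1)} is realized by choosing, for $t\in\{0,\ldots,\ell-1\}$, an arbitrary ordering of $\D_{t+1}$ as $\gamma(t)$ and extending periodically; since stability depends only on set membership $i\in\gamma(t)$, the ordering is immaterial. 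Because $\gamma$ is purely time-dependent, Lemma~\ref{lem:auxres1} guarantees each $\sigma_i$ is a genuine time-inhomogeneous Markov chain, and Lemma~\ref{lem:auxres2} guarantees $\gamma$ has period $\ell$ exactly when every $\Pi_i$ has period $\ell$, which is the hypothesis needed to invoke Theorem~\ref{t:recall_res}.

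The central computation is to substitute the explicit transition probabilities of \S\ref{s:prob_stat} into the Lyapunov identities \eqref{e:maincondn1}--\eqref{e:maincondn2}. Since $p_{k,\is}(t)=1-p,\ p_{k,\iu}(t)=p$ when $i\in\gamma(t)$ and $p_{k,\is}(t)=0,\ p_{k,\iu}(t)=1$ when $i\notin\gamma(t)$, independently of $k\in R_i$, the sum $\sum_{j\in R_i}p_{kj}(\tau)A_k^\top P_{i,\tau+1}(j)A_k$ collapses to $(1-p)A_k^\top P_{i,\tau+1}(\is)A_k+pA_k^\top P_{i,\tau+1}(\iu)A_k$ when $i$ is scheduled at time $\tau$ and to $A_k^\top P_{i,\tau+1}(\iu)A_k$ otherwise. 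With the index identification $\D_{\tau+1}\leftrightarrow\gamma(\tau)$ for \eqref{e:maincondn1} and $\D_1\leftrightarrow\gamma(\ell)=\gamma(0)$ for the wrap-around equation \eqref{e:maincondn2}, the identities \eqref{e:maincondn1}--\eqref{e:maincondn2} become precisely \eqref{e:condn1}--\eqref{e:condn4}. Thus, for any plant $i$ that is scheduled at least once, Theorem~\ref{t:recall_res} gives that ESMS of plant $i$ is equivalent to \textup{(C3)} holding for that $i$.

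To close the equivalence I would treat the two directions as follows. For necessity, given a $\gamma$ of period $\ell$ under which every plant is ESMS, define the $\D_q$ as above; \textup{(C1)} is immediate, \textup{(C3)} follows from Theorem~\ref{t:recall_res} applied to each plant, and \textup{(C2)} is argued separately: if some plant $i$ lay outside every $\D_q$ then $i\notin\gamma(t)$ for all $t$, forcing $\sigma_i(t)=\iu$ almost surely and hence the deterministic recursion $x_i(t)=A_i^{t}x_i(0)$, which cannot satisfy \eqref{e:stability} since $A_i$ is unstable by Assumption~\ref{a:stability}; so every plant is scheduled and \textup{(C2)} holds. For sufficiency, given $(\D_q)$ satisfying \textup{(C1)}--\textup{(C3)}, build $\gamma$ from \textup{(C1)} as described; here \textup{(C2)} ensures that the scope ``$i\in\D_q$ for some $q$'' of \textup{(C3)} covers all $N$ plants, so \textup{(C3)} supplies \eqref{e:maincondn1}--\eqref{e:maincondn2} for every plant, and Theorem~\ref{t:recall_res} returns ESMS of each plant.

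The only genuinely delicate point I anticipate is the bookkeeping of the time index under periodicity, namely keeping the shift $\D_{\tau+1}\leftrightarrow\gamma(\tau)$ and the wrap-around $\D_1\leftrightarrow\gamma(\ell)=\gamma(0)$ consistent with the convention $p_{kj}(t)=\PP(\sigma_i(t)=j\mid\sigma_i(t-1)=k)$ used in Theorem~\ref{t:recall_res}; everything else is a direct substitution or an appeal to the cited results. I would also note explicitly that the listing of \textup{(C2)} is not redundant precisely because \textup{(C3)} is vacuous for an unscheduled plant, and that ``distinct'' is to be read as labelling an ordered tuple, with repetitions across different $q$ permitted, since an ESMS-ensuring $\gamma$ may schedule the same set of plants at two different instants within a period.
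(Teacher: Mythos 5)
Your proposal matches the paper's proof essentially step for step: both directions reduce to a plant-by-plant application of Theorem~\ref{t:recall_res} via Lemma~\ref{lem:auxres2}, with the same substitution of the explicit transition probabilities (including the shift \(\D_{\tau+1}\leftrightarrow\gamma(\tau)\) and the wrap-around \(\D_1\leftrightarrow\gamma(\ell)=\gamma(0)\)) and the same appeal to Assumption~\ref{a:stability} to force \ref{mainprop2} in the necessity direction. The only divergence is your closing remark on ``distinct'': the paper instead asserts that periodicity of \(\gamma\) makes the sets \(\overline{\D}_q\) pairwise distinct, a claim your ordered-tuple reading (repetitions across different \(q\) permitted) deliberately sidesteps --- and rightly so, since a period-\(\ell\) sequence may schedule the same set at two instants within one period, so your interpretation actually patches the one unjustified step in the paper's necessity argument.
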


    {Theorem \ref{t:mainres} provides necessary and sufficient conditions for the existence of stabilizing scheduling sequences that are periodic with period \(\ell\). It relies on the existence of \(\ell\) distinct subsets, \(\D_q\), of \(\{1,2,\ldots,N\}\), that satisfy conditions \ref{mainprop1}-\ref{mainprop3}. Condition \ref{mainprop1} ensures that each set \(\D_q\), \(q=1,2,\ldots,\ell\), has \(M\)-many elements, condition \ref{mainprop2} ensures that each element of the set \(\{1,2,\ldots,N\}\) appears in at least one \(\D_q\), \(q\in\{1,2,\ldots,\ell\}\), and condition \ref{mainprop3} ensures that for each \(i\in\{1,2,\ldots,N\}\), the matrices \(A_{\is}\), \(A_{\iu}\), \(i=1,2,\ldots,N\) and the probability of data loss, \(p\), together satisfy certain matrix equalities based on which \(\D_q\) the plant \(i\) appears in, \(q\in\{1,2,\ldots,\ell\}\).}

     {Given the matrices \(A_i\), \(B_i\), \(K_i\), \(i=1,2,\ldots,N\), the capacity of the network, \(M\), the probability of data loss, \(p\), and a number \(\ell\in\N\), we next present an algorithm that checks the conditions of Theorem \ref{t:mainres} and if satisfied, designs a purely time-dependent periodic scheduling sequence, \(\gamma\), whose period is \(\ell\).} Such a scheduling sequence allows the elements of \(\D_q\), \(q=1,2,\ldots,\ell\), to access the shared communication network (in order) and repeats this process eternally.
    \begin{algorithm}[htbp]
			\caption{Design of stabilizing periodic scheduling sequences} \label{algo:sched-policy_design}
		\begin{algorithmic}[1]
			\renewcommand{\algorithmicrequire}{\textbf{Input:}}
			\renewcommand{\algorithmicensure}{\textbf{Output:}}
			
			\REQUIRE The matrices \(A_i\), \(B_i\), \(K_i\), \(i=1,2,\ldots,N\), the capacity of the network, \(M\), the probability of data loss, \(p\) and a number \(\ell\in\N\)
			\ENSURE A stabilizing periodic scheduling sequence, \(\gamma\), with period \(\ell\) or a failure message

            \STATE Compute
            \begin{align*}
                \ell_{\min} :=
                \begin{cases}
                    \lfloor{N/M}\rfloor,\:\:&\:\text{if}\:N\%M = 0,\\
                    \lfloor{N/M}\rfloor + 1,\:\:&\:\text{if}\:N\%M \neq 0.
                \end{cases}
            \end{align*}
            \IF {\(\ell<\ell_{\min}\),}
                \STATE Output ``Error''.
            \ELSE
			
			 \STATE Construct \(\Gamma = \biggl\{(\D_q)_{q=1}^{\ell}\:|\:\D_q,\:q=1,2,\ldots,\ell\:\text{are distinct and satisfy \ref{mainprop1}-\ref{mainprop2}}\biggr\}\).

             \FOR {each \((\D_q)_{q=1}^{\ell}\in\Gamma\)}
                \FOR {each \(i\in\D_q\), \(q=1,2,\ldots,\ell\)}
                    \STATE Solve the following feasibility problem for \(P_{i,1}(j)\), \(P_{i,2}(j),\ldots\), \(P_{i,\ell}(j)\in\R^{d\times d}\), \(j\in R_i\):
                    \begin{align}
                    \label{e:feasprob}
                        \minimize \:\:&\:\: 1\nonumber\\
                        \sbjto \:\:&\:\:
                        \begin{cases}
                            &(1-p)A_{k}^\top P_{i,\tau+1}(\is)A_k + pA_k^\top P_{i,\tau+1}(\iu)A_k - P_{i,\tau}(k) \prec 0,\\&\quad\quad\:\text{if}\:i\in\D_{\tau+1},\:\tau=1,2,\ldots,\ell-1,\:k\in R_i,\\
                            &A_k^\top P_{i,\tau+1}(\iu)A_k - P_{i,\tau}(k) \prec 0,\\&\quad\quad\:\text{if}\:i\notin\D_{\tau+1},\:\tau = 1,2,\ldots,\ell-1,\:k\in R_i,\\
                            &(1-p)A_k^\top P_{i,1}(\is)A_k + pA_k^\top P_{i,1}(\iu)A_k - P_{i,\ell}(k) \prec 0,\\&\quad\quad\:\:\text{if}\:i\in\D_1,\:k\in R_i,\\
                            &A_k^\top P_{i,1}(\iu)A_k - P_{i,\ell}(k) \prec 0,\\&\quad\quad\:\:\text{if}\:i\notin\D_1,\:k\in R_i,\\
                            &P_{i,1}(j) = P_{i,1}^\top(j), P_{i,2}(j)=P_{i,2}^\top(j),\ldots, P_{i,\ell}(j) = P_{i,\ell}^\top(j),\:\:j\in R_i,\\
                            &P_{i,1}(j), P_{i,2}(j),\ldots, P_{i,\ell}(j)\succ 0,\:\:j\in R_i.
                        \end{cases}
                    \end{align}
                \ENDFOR
                \IF {\eqref{e:feasprob} admits a solution for each \(i\in\D_q\), \(q=1,2,\ldots,\ell\),}
                    \STATE Go to \ref{step:design}.
                \ELSE
                    \STATE Output ``No stabilizing periodic scheduling sequence of period \(\ell\)''.
                \ENDIF
             \ENDFOR

             \STATE\label{step:design} Set \(v_q\in\Svec\) to be the vector containing the elements in the set \(\D_q\), \(q=1,2,\ldots,\ell\).
             \STATE Initialize \(\gamma(0) = v_1\), \(\gamma(1) = v_2,\ldots\), \(\gamma(\ell-1) = v_\ell\).
             \STATE Set \(\gamma(t+\ell) = \gamma(t)\) for all \(t\in\N_0\).
        \ENDIF
		\end{algorithmic}
	\end{algorithm}

    Recall that the plants under consideration are open-loop unstable. Consequently, any stabilizing scheduling sequence must allow all plants \(i\in\{1,2,\ldots,N\}\) to access the shared communication network. Algorithm \ref{algo:sched-policy_design} first computes \(\ell_{\min}\) --- the minimum number of distinct sets required to facilitate the above. If the desired period of \(\gamma\) is less than \(\ell_{\min}\), then the algorithm outputs an error message. Otherwise, for every possible choices of \((\D_q)_{q=1}^{\ell}\) that satisfy \ref{mainprop1}-\ref{mainprop2}, Algorithm \ref{algo:sched-policy_design} solves the feasibility problem \eqref{e:feasprob} to determine, if exist, the matrices \(P_{i,1}(j)\), \(P_{i,2}(j),\ldots\), \(P_{i,\ell}(j)\), \(j\in R_i\), \(i=1,2,\ldots,N\), that satisfy conditions \eqref{e:condn1}-\eqref{e:condn4}. The feasibility problem \eqref{e:feasprob} can be solved by using standard LMI solver toolboxes. If there exists \((\D_q)_{q=1}^{\ell}\) such that a solution to \eqref{e:feasprob} is obtained for each \(i\in\D_q\), \(q=1,2,\ldots,\ell\), then Algorithm \ref{algo:sched-policy_design} designs a scheduling sequence, \(\gamma\), with period \(\ell\). If for all \((\D_q)_{q=1}^{\ell}\), there exists \(i\in\D_q\), \(q\in\{1,2,\ldots,\ell\}\), such that \eqref{e:feasprob} does not admit a solution, then Algorithm \ref{algo:sched-policy_design} reports non-existence of a stabilizing periodic scheduling sequence of period \(\ell\). {Given the matrices \(A_i\), \(B_i\), \(K_i\), \(i=1,2,\ldots,N\), the probability of data loss, \(p\) and a period, \(\ell\) of a scheduling sequence, the feasibility problem \eqref{e:feasprob} can be solved by using standard LMI solver toolboxes. We will employ the LMI solver toolbox in MATLAB for our numerical examples presented in \S\ref{s:num_ex}. The underlying algorithm of this toolbox is Gahinet and Nemirovski's projective method \cite{Gahinet1997}, which has a polynomial time complexity.}
     \begin{example}
    \label{ex:algorithm}
    \rm{
        Let \(N = 10\), \(M = 4\) and \(\ell = 3\). Clearly, \(\ell = \ell_{\min}\). Suppose that \(\D_1 = \{1,3,4,7\}\), \(\D_2 = \{2,4,8,9\}\) and \(\D_3 = \{5,6,7,10\}\) satisfy \ref{mainprop1}-\ref{mainprop3}. Then a scheduling sequence, \(\gamma\), is constructed in Steps 16.-18. of Algorithm \ref{algo:sched-policy_design} is as follows:
        \begin{align*}
            \gamma(0) &= v_1 = \pmat{1\\3\\4\\7},\:\:\gamma(1) = v_2 = \pmat{2\\4\\8\\9},\:\:\gamma(2) = v_3 = \pmat{5\\6\\7\\10},\\
            \gamma(3) &= v_1,\:\:\gamma(4) =v_2,\:\:\gamma(5) = v_3,\\
            \vdots.
        \end{align*}
    }
    \end{example}

	{The following result asserts periodicity and stabilizing properties of scheduling sequences obtained from Algorithm \ref{algo:sched-policy_design}.}

    {\begin{proposition}
    \label{prop:algores}
    	Consider an NCS described in \S\ref{s:prob_stat}. Let the matrices \(A_i\), \(B_i\), \(K_i\), \(i=1,2,\ldots,N\), the capacity of the network, \(M\), the probability of data loss, \(p\), and a period, \(\ell\) for a scheduling sequence be given. The following are true:
	\begin{enumerate}[label = \roman*), leftmargin = *]
		\item A scheduling sequence, \(\gamma\), obtained from Algorithm \ref{algo:sched-policy_design} is periodic with period \(\ell\) and ensures ESMS of each plant \(i\in\{1,2,\ldots,N\}\).
		\item A failure message obtained from Algorithm \ref{algo:sched-policy_design} implies that there exists no scheduling sequence of period \(\ell\) that ensures ESMS of each plant \(i\in\{1,2,\ldots,N\}\).
	\end{enumerate}
    \end{proposition}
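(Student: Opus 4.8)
The plan is to derive Proposition~\ref{prop:algores} as a consequence of Theorem~\ref{t:mainres} together with Lemma~\ref{lem:auxres2}, essentially by verifying that the output of Algorithm~\ref{algo:sched-policy_design} realizes exactly the objects whose existence Theorem~\ref{t:mainres} characterizes. The central bridge I would establish first is that the strict-inequality feasibility problem~\eqref{e:feasprob} is solvable for a given family \((\D_q)_{q=1}^{\ell}\) \emph{if and only if} the equality conditions \eqref{e:condn1}--\eqref{e:condn4} of condition~\ref{mainprop3} hold for that same family. Indeed, if \eqref{e:feasprob} admits symmetric positive-definite \(P_{i,\tau}(j)\), then the left-hand sides of the four matrix inequalities are symmetric and negative definite, so one may simply \emph{define} \(Q_{i,\tau}(k)\) to be the negatives of those left-hand sides; these are symmetric and positive definite, and \eqref{e:condn1}--\eqref{e:condn4} hold by construction. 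Conversely, if \ref{mainprop3} holds with positive-definite \(Q_{i,\tau}(k)\), then the same \(P_{i,\tau}(j)\) satisfy \eqref{e:feasprob} since \(-Q_{i,\tau}(k)\prec 0\). This equivalence is the workhorse and I would state it as the opening step.

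\emph{Proof of part (i).} Suppose Algorithm~\ref{algo:sched-policy_design} returns a scheduling sequence \(\gamma\). By inspection of Steps~16--18, \(\gamma\) is defined by \(\gamma(q-1)=v_q\) for \(q=1,\ldots,\ell\) and extended via \(\gamma(t+\ell)=\gamma(t)\), so \(\gamma\) is periodic with period \(\ell\) directly from Definition~\ref{d:periodic-sequence}. The algorithm reaches Step~16 only through Steps~12--13, i.e.\ only when there exists a family \((\D_q)_{q=1}^{\ell}\in\Gamma\) for which \eqref{e:feasprob} is solvable for every \(i\in\D_q\), \(q=1,\ldots,\ell\). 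By construction of \(\Gamma\) in Step~5, this family consists of distinct sets satisfying \ref{mainprop1}--\ref{mainprop2}; by the equivalence of the opening step, solvability of \eqref{e:feasprob} yields condition~\ref{mainprop3}. Hence all hypotheses of Theorem~\ref{t:mainres} are met, so there exists a periodic scheduling sequence of period \(\ell\) rendering every plant ESMS. It remains to check that the particular \(\gamma\) constructed by the algorithm is itself stabilizing, and this is where I would be most careful: I must confirm that allocating the network to the elements of \(v_q\) at time \(q-1\) induces, for each plant \(i\), exactly the transition probability matrix used in Theorem~\ref{t:mainres}, namely that \(i\in\D_{\tau+1}\) produces rows with entries \(1-p\) and \(p\) and \(i\notin\D_{\tau+1}\) produces the absorbing unstable row. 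This follows from the definitions of \(p_{k\ell}(t)\) in \S\ref{s:prob_stat} and from Lemma~\ref{lem:auxres2}, which guarantees that the periodicity of \(\gamma\) transfers to periodicity of each \(\Pi_i\) with the same period \(\ell\); the matrix equalities \eqref{e:condn1}--\eqref{e:condn4} are then precisely \eqref{e:maincondn1}--\eqref{e:maincondn2} of Theorem~\ref{t:recall_res} specialized to these rows, so invoking the reverse direction of Theorem~\ref{t:recall_res} delivers ESMS of each plant under this \(\gamma\).

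\emph{Proof of part (ii).} This is the contrapositive of the necessity direction of Theorem~\ref{t:mainres}, and I would argue it directly. A failure message is emitted in one of two ways: either \(\ell<\ell_{\min}\) (Step~3), or for \emph{every} family \((\D_q)_{q=1}^{\ell}\in\Gamma\) there is some \(i\) for which \eqref{e:feasprob} has no solution (Step~14). In the first case, since all plants are open-loop unstable and each \(\D_q\) activates at most \(M\) plants, a counting argument shows that fewer than \(\ell_{\min}\) distinct sets of size \(M\) cannot satisfy \ref{mainprop2}, so no admissible family exists and Theorem~\ref{t:mainres} forbids any stabilizing \(\gamma\) of period \(\ell\). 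In the second case, by the opening equivalence, the nonsolvability of \eqref{e:feasprob} for every family in \(\Gamma\) means condition~\ref{mainprop3} fails for every family satisfying \ref{mainprop1}--\ref{mainprop2}; equivalently, no family of distinct subsets satisfies \ref{mainprop1}--\ref{mainprop3} simultaneously. By the ``only if'' direction of Theorem~\ref{t:mainres}, the existence of such a family is \emph{necessary} for a stabilizing periodic \(\gamma\) of period \(\ell\), so none exists.

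\emph{Anticipated obstacle.} The routine part is the inequality/equality equivalence; the genuinely delicate step is matching the algorithm's combinatorial enumeration over \(\Gamma\) to the existential quantifier in Theorem~\ref{t:mainres}. I must make sure the enumeration in Step~5 is \emph{exhaustive} over all admissible families (so that a Step~14 failure really rules out every candidate), and that the ordered tuple \((\D_q)_{q=1}^{\ell}\) produced by the algorithm respects the indexing convention \(i\in\D_{\tau+1}\) versus \(i\notin\D_{\tau+1}\) built into \eqref{e:condn1}--\eqref{e:condn4}, including the wrap-around case \eqref{e:condn3}--\eqref{e:condn4} linking \(\D_\ell\) back to \(\D_1\). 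Verifying this index-alignment, and the \(\ell_{\min}\) counting lemma for the error branch, are the two places I would write out the details rather than defer them.
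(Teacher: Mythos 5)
Your proposal is correct and follows essentially the same route as the paper: the paper isolates your ``opening step'' as Lemma~\ref{lem:auxres3} (the equivalence between solvability of \eqref{e:feasprob} and condition~\ref{mainprop3}, proved exactly by taking \(Q_{i,\tau}(k)\) to be the negatives of the left-hand sides), then obtains part~(i) from the sufficiency half of Theorem~\ref{t:mainres} --- whose proof constructs precisely the \(\gamma\) of Steps~16--18 and verifies it via Lemma~\ref{lem:auxres2} and Theorem~\ref{t:recall_res}, as you do --- and part~(ii) from the necessity half. Your explicit treatment of the \(\ell<\ell_{\min}\) error branch and the index-alignment check are details the paper leaves implicit, but they do not change the argument.
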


     \begin{rem}
    \label{rem:compa1}
    \rm{
        Switched systems have appeared before in the scheduling literature, see e.g., \cite{Hristu2001,Lin2005,quevedo2020} and the references therein. In particular, the class of average dwell time switching signals is proven to be a useful tool in the design of stabilizing scheduling sequences for NCSs with continuous-time plants, see e.g., \cite{Lin2005}. A stabilizing average dwell time switching signal involves two conditions on the time interval \(]0:t]\) for every \(t\in\N\): i) an upper bound on the number of switches and ii) a lower bound on the ratio of durations of activation of stable to unstable subsystems \cite{Liberzon_IOSS}. In contrast, our {current} design of stabilizing scheduling sequences for discrete-time NCSs solely relies periodicity of scheduling sequences (and hence, periodicity of the transition probability matrices of the individual plants), and does not involve nor imply restrictions on the behaviour of a scheduling logic on every time interval \(]0:t]\), \(t\in\N\).
    }
    \end{rem}
    \begin{rem}
    \label{rem:compa2}
    \rm{
        In the discrete-time setting, recently in \cite{quevedo2020} we employed switched systems and graph theory to design stabilizing periodic scheduling sequences for NCSs under ideal communication between the plants and their controllers. The design of such sequences involves what is called \(T\)-contractive cycles on the underlying weighted directed graph of an NCS. The results presented in this paper differ from and extend our earlier work \cite{quevedo2020} in the following ways: First, in \cite{quevedo2020} we consider ideal communication scenario between the plants and their controllers, while in this paper we consider communication uncertainties, in particular, probabilistic data losses in the shared communication network. Second, the stability conditions in \cite{quevedo2020} rely on the existence of solutions to a class of feasibility problems involving design of Lyapunov-like functions. The said design problem is numerically complex, and we rely on a partial solution to it in the sense that no solution to their design algorithm does not ensure non-existence of suitable Lyapunov-like functions, see \cite[\S 5]{quevedo2020} for detailed discussions and results. In contrast, in this paper we employ the properties of the matrices corresponding to the modes of operation of the individual plants as the main apparatus for our analysis. Third, the stability conditions presented in \cite{quevedo2020} are only sufficient while in this paper we propose necessary and sufficient conditions for stability under periodic scheduling sequences with a pre-specified period.
    }
    \end{rem}
    \begin{rem}
    \label{rem:periodicity}
    \rm{
        The proposed class of stabilizing scheduling sequences is \emph{static} and thereby easy to implement. Indeed, the sets \(\D_q\), \(q=1,2,\ldots,\ell\) are computed offline and a scheduling sequence is implemented by assigning the elements of \(\D_q\), \(q=1,2,\ldots,\ell\) to \(\gamma\) (in order) and repeating the process eternally.
    }
    \end{rem}
\subsection{Design of static state-feedback controllers}
\label{ss:results_set2}
    {It is evident that for any plant \(i\in\{1,2,\ldots,N\}\), whether the feasibility problem \eqref{e:feasprob} admits a solution or not depends on the matrices \(A_k\), \(k\in R_i\), the probability of data loss, \(p\) and the given period, \(\ell\) of a scheduling sequence. We have so far considered the matrices \(A_i\), \(B_i\) and \(K_i\) to be ``given''. However, a control engineer often has the freedom to design a suitable state-feedback controller matrix, i.e., the matrix \(K_i\), prior to connecting a plant to a shared network. Given the matrices \(A_i,B_i\), \(i=1,2,\ldots,N\) the capacity of the network, \(M\), the probability of data loss, \(p\) and the period, \(\ell\) of a scheduling sequence, we next present an algorithm that designs matrices \(K_i\), \(i=1,2,\ldots,N\) such that the feasibility problem \eqref{e:feasprob} admits a solution for all plants \(i\in\{1,2,\ldots,N\}\).}

    {
    	\begin{algorithm}[htbp]
			\caption{Design of static state-feedback controllers, \(K_i\), \(i=1,2,\ldots,N\)} \label{algo:contrl_design}
		\begin{algorithmic}[1]
			\renewcommand{\algorithmicrequire}{\textbf{Input:}}
			\renewcommand{\algorithmicensure}{\textbf{Output:}}
			
			\REQUIRE The matrices \(A_i\), \(B_i\), \(i=1,2,\ldots,N\), the capacity of the network, \(M\), the probability of data loss, \(p\) and a number \(\ell\in\N\)
			\ENSURE State-feedback controllers, \(K_i\), \(i=1,2,\ldots,N\) or a failure message

			\STATE Construct \(\Gamma = \biggl\{(\D_q)_{q=1}^{\ell}\:|\:\D_q,\:q=1,2,\ldots,\ell\:\text{are distinct and satisfy \ref{mainprop1}-\ref{mainprop2}}\biggr\}\).

             		\FOR {each \((\D_q)_{q=1}^{\ell}\in\Gamma\)}\label{step:repeat-repeat}
				\STATE Set \(\P = \emptyset\).
				\FOR {each \(i=1,2,\ldots,N\)}\label{step:r-repeat}
					\STATE Solve the following feasibility problem for \(P_{i,1}(\iu),\ldots\), \(P_{i,\ell}(\iu)\) and \(P_{i,q}(\is)\in\R^{d\times d}\), where \(i\in\D_q\), \(q\in\{1,2,\ldots,\ell\}\):
					\begin{align}
					\label{e:feasprob1}
						\minimize\:\:&\:\: 1\nonumber\\
                        				\sbjto \:\:&\:\:
                        				\begin{cases}
			&(1-p)A_i^\top P_{i,\tau+1}(\is)A_i + pA_i^\top P_{i,\tau+1}(\iu)A_i - P_{i,\tau}(\iu) \prec 0,\\
			&\hspace*{4cm}\:\:\text{if}\:i\in\D_{\tau+1},\:\tau = 1,2,\ldots,\ell-1,\\
			&A_i^\top P_{i,\tau+1}(\iu)A_i - P_{i,\tau}(\iu) \prec 0,
			\:\:\text{if}\:i\notin\D_{\tau+1},\:\tau = 1,2,\ldots,\ell-1,\\
			&(1-p)A_i^\top P_{i,1}(\is)A_i + pA_i^\top P_{i,1}(\iu)A_i - P_{i,\ell}(\iu) \prec 0,
			\:\:\text{if}\:i\in\D_1,\\
			&A_i^\top P_{i,1}(\iu)A_i - P_{i,\ell}(\iu) \prec 0,
			\:\:\text{if}\:i\notin\D_1,\\
			&P_{i,1}(\iu) = P_{i,1}^\top(\iu),\ldots, P_{i,\ell}(\iu) = P_{i,\ell}^\top(\iu),P_{i,q}(\is) = P_{i,q}^\top(\is),\\
			&P_{i,1}(\iu) \succ 0,\ldots, P_{i,\ell}(\iu) \succ 0,P_{i,q}(\is) \succ 0.
						\end{cases}
					\end{align}
					\IF {there exists a solution to \eqref{e:feasprob1}}
						\STATE Solve the following feasibility problem for \(q\in\{1,2,\ldots,\ell\}\) and \(Y_i\in\R^{m\times d}\):
						\begin{align}
						\label{e:feasprob2}
						\minimize\:\:&\:\: 1\nonumber\\
                        				\sbjto \:\:&\:\:
                        				\begin{cases}
							&\bigl(A_i P_{i,q}^{-1}(\is)+B_i Y_i\bigr)^\top P_{i,q+1}(\iu) \bigl(A_i P_{i,q}^{-1}(\is) + B_i Y_i\bigr) - P_{i,q}^{-1}(\is) \prec 0,\\
							&\hspace*{4cm}\:\:\text{if}\:i\in\D_q\:\text{and}\:i\notin\D_{q+1},\\
			&\biggl(A_i\bigl((1-p)P_{i,q+1}(\is)+pP_{i,q+1}(\iu)\bigr)^{-1}+B_iY_i\biggr)^\top P_{i,q+1}(\iu)\times\\
			&\:\:\biggl(A_i\bigl((1-p)P_{i,q+1}(\is)+pP_{i,q+1}(\iu)\bigr)^{-1}+B_iY_i\biggr)
			- P_{i,q}^{-1}(\is) \prec 0,\\
			&\hspace*{4cm}\:\:\text{if}\:i\in\D_q\:\text{and}\:i\in\D_{q+1},\\
			&P_{i,\ell+1} := P_{i,1}.
						\end{cases}
						\end{align}
						
						\IF {there exists a solution to \eqref{e:feasprob2}}
							\STATE Set
							\(K_i = Y_i P_{i,q}(\is)\) and
							solve the following feasibility problem for \(P_{i,1}(\is),\ldots,P_{i,q-1}(\is)\), \(P_{i,q+1}(\is),\ldots,P_{i,\ell}(\is)\in\R^{d\times d}\), where \(i\in\D_q\), \(q\in\{1,2,\ldots,\ell\}\):
						\begin{align}
						\label{e:feasprob3}
						\minimize\:\:&\:\: 1\nonumber\\
                        				\sbjto \:\:&\:\:
                        				\begin{cases}
							&(1-p)(A_i+B_iK_i)^\top P_{i,\tau+1}(\is)(A_i+B_iK_i)
							+p(A_i+B_iK_i)^\top P_{i,\tau+1}(\iu)(A_i+B_iK_i)\\
							&\hspace*{3cm}-P_{i,\tau}(\is)\prec 0,
			\:\:\text{if}\:i\in\D_{\tau+1},\:\tau=1,2,\ldots,\ell-1,\\
			&(A_i+B_iK_i)^\top P_{i,\tau+1}(\iu)(A_i+B_iK_i) - P_{i,\tau}(\is) \prec 0,\\
			&\hspace*{3cm}\:\:\text{if}\:i\notin\D_{\tau+1},\:\tau=1,2,\ldots,\ell-1,\\
			&(1-p)(A_i+B_iK_i)^\top P_{i,1}(\is)(A_i+B_iK_i)+p(A_i+B_iK_i)^\top P_{i,1}(\iu)(A_i+B_iK_i)\\
			&\hspace*{4cm}-P_{i,\ell}(\is) \prec 0,
			\:\:\text{if}\:i\in\D_1,\\
			&(A_i+B_iK_i)^\top P_{i,1}(\iu)(A_i+B_iK_i) - P_{i,\ell}(\is)\prec 0,
			\:\:\text{if}\:i\notin\D_1,\\
			&P_{i,1}(\is)=P_{i,1}^\top(\is),\ldots,P_{i,q-1}(\is)=P_{i,q-1}^\top(\is),\\
			&P_{i,q+1}(\is)=P_{i,q+1}^\top(\is),\ldots,P_{i,\ell}(\is)= P_{i,\ell}^\top(\is),\\
			&P_{i,1}(\is)\succ 0, \ldots, P_{i,q-1}(\is)\succ 0,\\
			&P_{i,q+1}(\is)\succ 0, \ldots, P_{i,\ell}(\is)\succ 0.
						\end{cases}
						\end{align}
						\IF {there exists a solution to \eqref{e:feasprob3}}
							\STATE Set \(\P = \P\cup\{i\}\).
						\ENDIF
						\ENDIF
					\ENDIF
				\ENDFOR
				\IF {\(\P = \{1,2,\ldots,N\}\)}
					\STATE Output \(K_i\), \(i=1,2,\ldots,N\) and exit.
				\ENDIF
			\ENDFOR
             		\STATE Output ``FAIL''.
			\end{algorithmic}
	\end{algorithm}
    }

     {Algorithm \ref{algo:contrl_design} relies on the matrices \(A_i\), \(B_i\), \(i=1,2,\ldots,N\), the capacity of the network, \(M\), the probability of data loss, \(p\) and the given period, \(\ell\) of a scheduling sequence to design suitable state-feedback controller matrices, \(K_i\), \(i=1,2,\ldots,N\). While \(A_i\), \(B_i\), \(i=1,2,\ldots,N\), \(p\) and \(\ell\) appear in the feasibility problems \eqref{e:feasprob1}-\eqref{e:feasprob3}, \(M\) determines the number of elements in each \(\D_q\), \(q = 1,2,\ldots,\ell\) that need to satisfy the required conditions. If there does not exist \((\D_q)_{q=1}^{\ell}\in\Gamma\) such that solutions to the feasibility problems \eqref{e:feasprob1}-\eqref{e:feasprob3} are obtained for all \(i\in\{1,2,\ldots,N\}\), then Algorithm \ref{algo:contrl_design} reports a failure.}

    {
    \begin{proposition}
    \label{prop:mainres}
    	Consider an NCS described in \S\ref{s:prob_stat}. Let the matrices \(A_i\), \(B_i\), \(i=1,2,\ldots,N\), the capacity of the network, \(M\), the probability of data loss, \(p\) and a period, \(\ell\) of a scheduling sequence be given. Suppose that matrices \(K_i\), \(i=1,2,\ldots,N\) are obtained from Algorithm \ref{algo:contrl_design}. Then there exists \((\D_q)_{q=1}^{\ell}\) satisfying \ref{mainprop1}-\ref{mainprop2} such that the feasibility problem \eqref{e:feasprob} admits a solution \(P_{i,1}(j),\ldots\), \(P_{i,\ell}(j)\), \(j\in R_i\) for each \(i\in\{1,2,\ldots,N\}\).
    \end{proposition}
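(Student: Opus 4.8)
The plan is to show that whenever Algorithm \ref{algo:contrl_design} terminates by outputting controllers $K_i$ (rather than ``FAIL''), the matrices generated along the way assemble into a feasible point of \eqref{e:feasprob} for the very tuple $(\D_q)_{q=1}^{\ell}$ on which the algorithm succeeded. Note first that this tuple was drawn from $\Gamma$, so it already satisfies \ref{mainprop1}--\ref{mainprop2}; hence only feasibility of \eqref{e:feasprob} for each plant needs to be established. Fixing a plant $i$, the key structural observation is that the constraints of \eqref{e:feasprob} split according to $k\in R_i$ into two families: the $k=\iu$ family, in which every $A_k$ equals $A_{\iu}=A_i$, and the $k=\is$ family, in which every $A_k$ equals $A_{\is}=A_i+B_iK_i$. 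I would prove feasibility by producing matrices that satisfy each family in turn.

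First I would match the $k=\iu$ family of \eqref{e:feasprob} with \eqref{e:feasprob1}. Substituting $A_{\iu}=A_i$ into the $k=\iu$ instances of \eqref{e:condn1}--\eqref{e:condn4} reproduces verbatim the constraints of \eqref{e:feasprob1}, whose unknowns are $P_{i,1}(\iu),\ldots,P_{i,\ell}(\iu)$ together with the ``anchor'' matrices $P_{i,q}(\is)$ for those $q$ with $i\in\D_q$. Since the algorithm reached $\P=\{1,\ldots,N\}$, problem \eqref{e:feasprob1} is feasible, so these matrices satisfy the whole $k=\iu$ family. Next I would treat the $k=\is$ family: with $A_{\is}=A_i+B_iK_i$ these are exactly the constraints listed in \eqref{e:feasprob3}, whose free variables are the remaining stable-mode matrices $P_{i,\tau}(\is)$, $\tau\neq q^*$, where $q^*$ is the index chosen in \eqref{e:feasprob2}. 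The single $k=\is$ constraint of \eqref{e:feasprob3} that carries the fixed anchor $P_{i,q^*}(\is)$ on its right-hand side has no free variables, so \eqref{e:feasprob3} can only be feasible if that constraint already holds; this is precisely what \eqref{e:feasprob2} arranges.

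The crucial technical device is the synthesis substitution $K_i=Y_iP_{i,q^*}(\is)$. In the case $i\in\D_{q^*}$, $i\notin\D_{q^*+1}$ it gives $A_iP_{i,q^*}^{-1}(\is)+B_iY_i=A_{\is}P_{i,q^*}^{-1}(\is)$, so the inverse-form inequality in \eqref{e:feasprob2} reads $P_{i,q^*}^{-1}(\is)A_{\is}^\top P_{i,q^*+1}(\iu)A_{\is}P_{i,q^*}^{-1}(\is)-P_{i,q^*}^{-1}(\is)\prec 0$; a congruence transformation by the symmetric positive definite matrix $P_{i,q^*}(\is)$ converts this into $A_{\is}^\top P_{i,q^*+1}(\iu)A_{\is}-P_{i,q^*}(\is)\prec 0$, which is exactly the anchor constraint. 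The case $i\in\D_{q^*+1}$ is handled by the analogous, somewhat longer, manipulation of the second inequality in \eqref{e:feasprob2}, again passing from the $P^{-1}$-form to the original $P$-form by congruence. Thus \eqref{e:feasprob2} and \eqref{e:feasprob3} together deliver matrices satisfying the entire $k=\is$ family for the designed $K_i$. Collecting $P_{i,\tau}(\iu)$ and the anchor $P_{i,q^*}(\is)$ from \eqref{e:feasprob1} with $P_{i,\tau}(\is)$, $\tau\neq q^*$, from \eqref{e:feasprob3} yields a complete set $P_{i,\tau}(j)$, $\tau=1,\ldots,\ell$, $j\in R_i$, meeting every constraint of \eqref{e:feasprob}; repeating this for each $i$ gives the claim.

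The main obstacle I anticipate is the bookkeeping of the \emph{shared} stable-mode matrices. The anchors $P_{i,q}(\is)$ are pinned down in \eqref{e:feasprob1} to satisfy the $k=\iu$ family, yet the same symbols reappear on the left-hand sides of the $k=\is$ family in \eqref{e:feasprob3}, where they are re-solved. When each plant belongs to exactly one set $\D_q$ (as is forced when $N=\ell M$), $q^*$ is the unique such index, the anchor is never re-solved, and the assembly is immediate. When a plant lies in several $\D_q$ the matrices $P_{i,q}(\is)$ for $q\neq q^*$ are both fixed by \eqref{e:feasprob1} and revised by \eqref{e:feasprob3}, so I would have to show these can be reconciled — using the strictness of the $\prec 0$ inequalities and the fact that, for indices $\tau$ with $i\notin\D_\tau$, $P_{i,\tau}(\is)$ enters only the right-hand side and may be taken as large as needed — so that the $k=\iu$ and $k=\is$ families hold simultaneously. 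This reconciliation is where I would concentrate the effort.
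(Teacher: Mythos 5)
Your proposal follows the same route as the paper's own proof: split the constraints of \eqref{e:feasprob} by \(k\in R_i\) into the \(A_{\iu}\)-family and the \(A_{\is}\)-family, certify the former by \eqref{e:feasprob1}, the latter by \eqref{e:feasprob3}, and use \eqref{e:feasprob2} with the substitution \(K_i=Y_iP_{i,q}(\is)\) to supply the anchor inequality \((A_i+B_iK_i)^\top P_{i,q+1}(\iu)(A_i+B_iK_i)-P_{i,q}(\is)\prec 0\) (resp.\ its variant with weight \((1-p)P_{i,q+1}(\is)+pP_{i,q+1}(\iu)\)). Your direct congruence by \(P_{i,q}(\is)\) is equivalent to the paper's longer chain (Schur complement, congruence by \(\diag\bigl(P_{i,q+1}^{-1}(\iu),P_{i,q}^{-1}(\is)\bigr)\), Schur complement back), so the technical core matches. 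Two small imprecisions in your narrative: when \(i\in\D_{q+1}\) the anchor constraint of \eqref{e:feasprob3} does contain the free variable \(P_{i,q+1}(\is)\), so your claim that it ``has no free variables'' is exact only in the subcase \(i\notin\D_{q+1}\); and your ``analogous manipulation'' for that subcase actually requires the inequality with factor \(A_iP_{i,q}^{-1}(\is)+B_iY_i\) and middle weight \((1-p)P_{i,q+1}(\is)+pP_{i,q+1}(\iu)\) --- which is the form the paper's proof manipulates, though \eqref{e:feasprob2} as printed places \(P_{i,q+1}(\iu)\) as the weight and the averaged inverse inside the factor.

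The ``reconciliation'' obstacle on which you propose to concentrate is genuine --- and here you are more careful than the paper, not less. The paper's proof certifies \eqref{e:pf3_step1}, \eqref{e:pf3_step3}, \eqref{e:pf3_step5}, \eqref{e:pf3_step7} using the stable-mode matrices returned by \eqref{e:feasprob1}, then replaces every non-anchor \(P_{i,\tau}(\is)\) by the solution of \eqref{e:feasprob3}, and never rechecks the \(k=\iu\) inequalities in which a re-solved \(P_{i,q'}(\is)\), \(q'\neq q\) with \(i\in\D_{q'}\), appears on the left-hand side. As you observe, the issue is vacuous when each plant belongs to exactly one \(\D_q\): the only scheduled stable-mode matrix is the anchor, which is never re-solved, and your device of inflating \(P_{i,\tau}(\is)\) at unscheduled indices is sound, since those matrices enter the constraint set only subtractively. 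But for a plant scheduled at several indices per period (e.g., plants 4 and 7 in Example \ref{ex:algorithm}), the assembled collection is not automatically a solution of \eqref{e:feasprob}, and neither your sketch nor the paper's proof closes this case --- the paper simply asserts the assembly. So: same approach, correctly executed wherever the paper's argument is complete, and the unresolved step you flagged is a gap in the paper's own proof rather than a defect specific to your plan.
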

    }

    {Notice that Proposition \ref{prop:mainres} relies on sufficient conditions for the existence of state-feedback controller matrices that are favourable for our class of stabilizing scheduling sequences. Indeed, our computation of \(K_i\), \(i=1,2,\ldots,N\), employed in Algorithm \ref{algo:contrl_design}, is not unique. Consequently, a failure message obtained from Algorithm \ref{algo:contrl_design} does not imply the non-existence of state-feedback controllers such that the plants and the shared communication network under consideration together admit a purely time-dependent stabilizing periodic scheduling sequence. }


    {
    \begin{rem}
    \label{rem:compa3}
    \rm{
    	The design of static state-feedback controllers for switched systems whose switching signals are time-homogeneous Markov chains was addressed earlier in the literature, see e.g., the works \cite{Zhang2009,Kordonis2014}.
    Our design of state-feedback controllers in Algorithm \ref{algo:contrl_design} caters to the existence of solutions to the feasibility problem \eqref{e:feasprob}. We exploit algebraic properties of the target inequalities (see our proof of Proposition \ref{prop:mainres}) for this purpose. While our analysis tool is similar in spirit to \cite{Zhang2009}, we tackle a more general setting compared to  \cite{Zhang2009} in the following sense: first, we deal with switched systems whose switching signals are time-inhomogeneous, and second, our design caters to simultaneous stability of \(N\) such systems.
    }
    \end{rem}
    }

    We now present a set of examples to demonstrate our techniques.
\section{Numerical experiments}
\label{s:num_ex}
	\begin{experiment}
    \label{ex:num_ex1}
    \rm{
        We consider an NCS with \(N=2\), where
        \begin{align*}
            A_1 &= \pmat{0.65 & 0.2\\-0.1 & 1.1},\:\:B_1 = \pmat{0\\1},\:\:K_1 = \pmat{0.1 & -1.1}\\
            \intertext{and}
            A_2 &= \pmat{0.7 & 0.1\\-0.2 & 1.1},\:\:B_2 = \pmat{0\\1},\:\:K_2 = \pmat{0.2 & -1.1}.
        \end{align*}
         The plant and controller dynamics are borrowed from \cite[\S IVA]{Liu2019}. Let the network capacity, \(M = 1\), the data loss probability, \(p = 0.5\) and the desired period of a scheduling sequence, \(\ell = 2\). We employ Algorithm \ref{algo:sched-policy_design} to solve Problem \ref{prob:main} in the above setting. The following steps are executed:
        \begin{enumerate}[label = Step \Roman*., leftmargin = *]
            \item Compute \(\ell_{\min} = 2\) and note that \(\ell = \ell_{\min}\).
            \item Construct \(\Gamma = \Biggl\{\biggl(\{1\},\{2\}\biggr),\biggl(\{2\},\{1\}\biggr)\Biggr\}\).
            \item We solve the feasibility problem \eqref{e:feasprob}. For \(\D_1 = \{2\}\) and \(\D_2 = \{1\}\), the following values of \(P_{i,1}(j)\), \(P_{i,2}(j)\), \(j\in R_i\), \(i=1,2\) are obtained:
                \begin{align*}
                    P_{1,1}(1_s) &= \pmat{891.90358 & 74.12886\\74.12886 & 673.79367},\:\:&\:\:P_{1,1}(1_u) &= \pmat{749.2162 & -253.33635\\-253.33635 & 2245.0484}\\
                    P_{1,2}(1_s) &= \pmat{797.2495 & -5.9026364\\-5.9026364 & 394.41295},\:\:&\:\:P_{1,2}(1_u) &= \pmat{815.56198 & -375.29485\\-375.29485 & 2929.1336},\\
                    P_{2,1}(2_s) &= \pmat{1116.1217 & -11.624074\\-11.624074 & 294.60972},\:\:&\:\:P_{2,1}(2_u) &= \pmat{1241.0856 & -537.21264\\-537.21264 & 2134.3708},\\
                    P_{2,2}(2_s) &= \pmat{1225.6192 & 61.169859\\61.169859 & 806.16873},\:\:&\:\:P_{2,2}(2_u) &= \pmat{1140.0419 & -378.54181\\-378.54181 & 1626.5343}.
                \end{align*}
                It follows that the above \(P_{i,1}(j)\), \(P_{i,2}(j)\), \(j\in R_i\), \(i=1,2\) are symmetric and positive definite and satisfy conditions \eqref{e:condn1}-\eqref{e:condn4} with positive definite \(Q_{i,1}(j)\), \(Q_{i,2}(j)\), \(j\in R_i\), \(i = 1,2\) given below:
                \begin{align*}
                     Q_{1,1}(1_s) &= \pmat{551.19716 & -30.703886\\-30.703886 & 641.53744},\:\:&\:\: Q_{1,1}(1_u) &= \pmat{367.11421 & -42.907906\\-42.907906 & 285.90995},\\
                     Q_{1,2}(1_s) &= \pmat{480.70565 & -103.30074\\-103.30074 & 364.44431},\:\:&\:\:
                     Q_{1,2}(1_u) &= \pmat{443.63392 & -49.668868\\-49.668868 & 294.12439},\\
                    Q_{2,1}(2_s) &= \pmat{557.50121 & -91.427005\\-91.427005 & 283.2093},\:\:&\:\:Q_{2,1}(2_u) &= \pmat{511.41201 & 24.728926\\24.728326 & 238.14305},\\
                    Q_{2,2}(2_s) &= \pmat{648.10341 & -21.332397\\-21.332397 & 794.3827},\:\:&\:\:Q_{2,2}(2_u) &= \pmat{437.10932 & 11.95756\\11.95756 & 205.5871}.
                \end{align*}
            \item A scheduling sequence, \(\gamma\), is constructed as
                \begin{align*}
                    \gamma(0) &= 1,\:\:\gamma(1) = 2,\\
                    \gamma(2) &= 1,\:\:\gamma(3) = 2,\\
                    \vdots.
                \end{align*}
            \item For each plant \(i\in\{1,2\}\), we pick \(100\) different initial conditions \(x_i(0)\) from the interval \([-1,+1]^{2}\) and plot \(\biggl(\EE\{\norm{x_{i}(t)}^{2}\}\biggr)_{t\in\N_0}\), see Figures \ref{fig:x_plot1} and \ref{fig:x_plot2}. ESMS is demonstrated for each plant in the NCS under consideration.
        \end{enumerate}
    }
    \begin{figure}[htbp]
        \includegraphics[scale = 0.4]{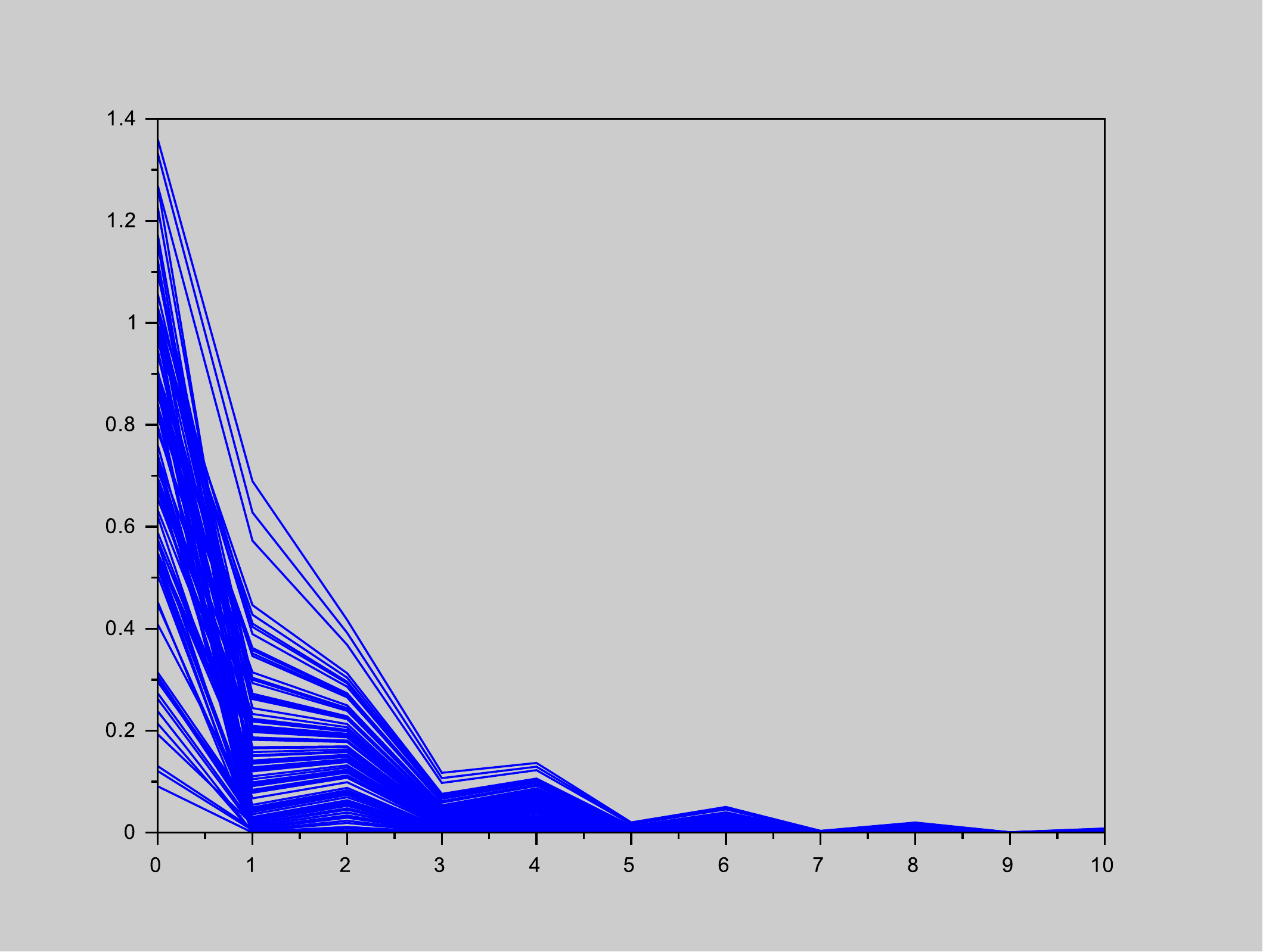}
        \caption{\(\EE\biggl\{\norm{x_{1}(t)}^{2}\biggr\}\) versus \(t\) for Example \ref{ex:num_ex1}}\label{fig:x_plot1}
    \end{figure}
    \begin{figure}[htbp]
        \includegraphics[scale = 0.4]{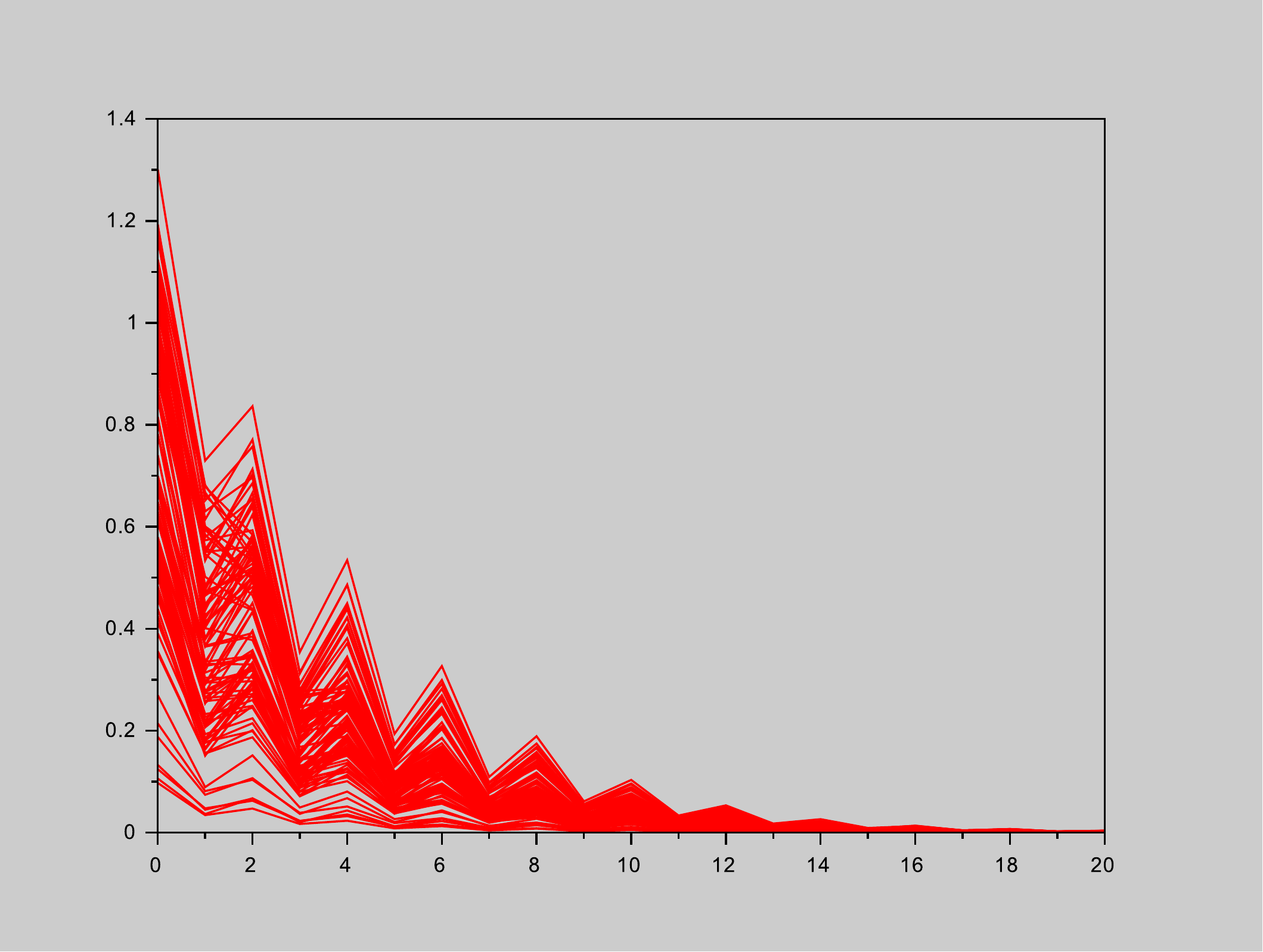}
        \caption{\(\EE\biggl\{\norm{x_{2}(t)}^{2}\biggr\}\) versus \(t\) for Example \ref{ex:num_ex1}}\label{fig:x_plot2}
    \end{figure}
    \end{experiment}

    \begin{experiment}
    \label{ex:num_ex2}
    \rm{
    {
    		Consider the setting of Experiment \ref{ex:num_ex1}. Suppose that the controller matrices, \(K_i\), \(i=1,2\) were not known. We apply Algorithm \ref{algo:contrl_design} to design \(K_i\), \(i=1,2\) based on the information of the matrices \(A_i\), \(B_i\), \(i=1,2\), the capacity of the network, \(M\), the probability of data loss, \(p\) and the given period, \(\ell\) of a scheduling sequence, such that there exists \((\D_q)_{q=1}^{\ell}\) satisfying \ref{mainprop1}-\ref{mainprop2} for which the feasibility problem \eqref{e:feasprob} admits solutions \(P_{i,1}(j),\ldots\), \(P_{i,\ell}(j)\), \(j\in R_i\) for each \(i\in\{1,2\}\). The following steps are carried out:
		\begin{enumerate}[label = Step \Roman*., leftmargin = *]
			\item Fix \(\D_1 = \{2,3\}\) and \(\D_2 = \{1,3\}\). We have \((\D)_{q=1}^{2}\in\Gamma\).
			\item Fix \(i=1\).
			\begin{enumerate}[label = \roman*), leftmargin = *]
				\item There exist symmetric and positive definite matrices \(P_{1,1}(1_u)\), \(P_{1,2}(1_u)\), \(P_{1,2}(1_s)\), described in Experiment \ref{ex:num_ex1}, that solve the feasibility problem \eqref{e:feasprob1}. Indeed,
				\begin{align*}
					(1-p)A_1^\top P_{1,2}(1_s)A_1 + p A_1^\top P_{1,2}(1_u)A_1 - P_{1,1}(1_u) &= -Q_{1,1}(1_u)\prec 0,\\
					A_1^\top P_{1,1}(1_u)A_1 - P_{1,2}(1_u) &= -Q_{1,2}(1_u)\prec 0,
				\end{align*}
				where \(Q_{1,1}(1_u)\) and \(Q_{1,2}(1_u)\) are as described in Experiment \ref{ex:num_ex1}.
				\item There exist \(q = 2\) and \(Y_1 = \pmat{0.0001048 & -0.0027874}\) that solve the feasibility problem \eqref{e:feasprob2}. Indeed,
				\begin{align*}
					&(A_1 P_{1,2}^{-1}(1_s) + B_1Y_1)^\top P_{1,1}(1_u) (A_1 P_{1,2}^{-1}(1_s) + B_1Y_1) - P_{1,2}^{-1}(1_s)\\
					=&\pmat{-0.0007517 & 0.0003\\0.0003 & -0.0023336} \prec 0.
				\end{align*}
				\item Set \(K_1 = Y_1 P_{1,2}(1_s) = \pmat{0.1 & -1.1}\).
				\item The above controller matrix is already demonstrated to be favourable. Indeed, we have that there exists a symmetric and positive definite matrix \(P_{1,1}(1_s)\), as described in Experiment \ref{ex:num_ex1}, that solves the feasibility problem \eqref{e:feasprob3}. Indeed,
				\begin{align*}
					&\hspace*{2cm}(1-p)(A_1+B_1K_1)^\top P_{1,2}(1_s)(A_1+B_1K_1)+p(A_1+B_1K_1)^\top P_{1,2}(1_u)(A_1+B_1K_1)\\
					&\hspace*{8cm}\quad\quad- P_{1,1}(1_s) = -Q_{1,1}(1_s)\prec 0,\\
					&\hspace*{2cm}(A_1+B_1K_1)^\top P_{1,1}(1_u)(A_1+B_1K_1)-P_{1,2}(1_s) = -Q_{1,2}(1_s) \prec 0,
				\end{align*}
				where \(Q_{1,1}(1_s)\) and \(Q_{1,2}(1_s)\) are as described in Experiment \ref{ex:num_ex1}.
			\end{enumerate}
			\item Fix \(i=2\).
			\begin{enumerate}[label = \roman*), leftmargin = *]
				\item There exist symmetric and positive definite matrices \(P_{2,1}(2_u)\), \(P_{2,2}(2_u)\), \(P_{2,1}(2_s)\), described in Experiment \ref{ex:num_ex1}, that solve the feasibility problem \eqref{e:feasprob1}. Indeed,
				\begin{align*}
					(1-p)A_2^\top P_{2,1}(2_s)A_2 + p A_2^\top P_{2,1}(2_u)A_2 - P_{2,2}(2_u) &= -Q_{2,2}(2_u)\prec 0,\\
					A_2^\top P_{2,2}(2_u)A_2 - P_{2,1}(2_u) &= -Q_{2,1}(2_u)\prec 0,
				\end{align*}
				where \(Q_{2,1}(2_u)\) and \(Q_{2,2}(2_u)\) are as described in Experiment \ref{ex:num_ex1}.
				\item There exist \(q = 1\) and \(Y_2 = \pmat{0.0001404 & -0.0037282}\) that solve the feasibility problem \eqref{e:feasprob2}. Indeed,
				\begin{align*}
					&(A_2 P_{2,1}^{-1}(2_s) + B_2Y_2)^\top P_{2,2}(2_u) (A_2 P_{2,1}^{-1}(2_s) + B_2Y_2) - P_{2,1}^{-1}(2_s)\\
					=&\pmat{-0.0004425 & 0.0002267\\0.0002267 & -0.0032444} \prec 0.
				\end{align*}
				\item Set \(K_2 = Y_2 P_{2,1}(2_s) = \pmat{0.2 & -1.1}\).
				\item The above controller matrix is already demonstrated to be favourable. Indeed, we have that there exists a symmetric and positive definite matrix \(P_{2,2}(2_s)\), as described in Experiment \ref{ex:num_ex1}, that solves the feasibility problem \eqref{e:feasprob3}. Indeed,
				\begin{align*}
					&\hspace*{2cm}(1-p)(A_2+B_2K_2)^\top P_{2,1}(2_s)(A_2+B_2K_2)+p(A_2+B_2K_2)^\top P_{2,1}(2_u)(A_2+B_2K_2)\\
					&\hspace*{8cm}\quad\quad- P_{2,2}(2_s) = -Q_{2,2}(2_s)\prec 0,\\
					&\hspace*{2cm}(A_2+B_2K_2)^\top P_{2,2}(2_u)(A_2+B_2K_2)-P_{2,1}(2_s) = -Q_{2,1}(2_s) \prec 0,
				\end{align*}
				where \(Q_{2,1}(2_s)\) and \(Q_{2,2}(2_s)\) are as described in Experiment \ref{ex:num_ex1}.
			\end{enumerate}
		\end{enumerate}
	}
	}
	\end{experiment}
	
	\begin{experiment}
	\label{ex:num_ex4}
	{\rm{
		We consider an NCS with \(N = 3\), where each plant is a batch reactor. We employ a discretised version of a linearised batch reactor model presented in \cite[\S IVA]{Walsh2002}. In particular, we have
		\(A_i = \pmat{1.0795 & -0.0045 & 0.2896 & -0.2367\\-0.0272 & 0.8101 & -0.0032 & 0.0323\\0.0447 & 0.1886 & 0.7317 & 0.2354\\0.0010 & 0.1888 & 0.0545 & 0.9115}\) and \(B_i = \pmat{0.0006 & -0.0239\\0.2567 & 0.0002\\0.0837 & -0.1346\\0.0837 & -0.0046}\), \(i=1,2,3\). Let the capacity of the network \(M = 2\), the probability of data loss, \(p=0.2\) and the desired period of a scheduling sequence, \(\ell = \ell_{\min} = 2\).
		
		We first apply Algorithm \ref{algo:contrl_design} to design state-feedback controller matrices, \(K_i\), \(i=1,2,3\) such that there exists \((\D_q)_{q=1}^{\ell}\) satisfying \ref{mainprop1}-\ref{mainprop2} with the feasibility problem \eqref{e:feasprob} admitting a solution \(P_{i,1}(j),P_{i,2}(j)\), \(j\in R_i\) for each \(i=1,2,3\). 
		We then generate a scheduling sequence, \(\gamma\), as follows:
                \begin{align*}
                    \gamma(0) &= \pmat{2\\3},\:\:\gamma(1) = \pmat{1\\3},\\
                    \gamma(2) &= \pmat{2\\3},\:\:\gamma(3) = \pmat{1\\3},\\
                    \vdots.
                \end{align*}
            For each plant \(i\in\{1,2,3\}\), we pick \(100\) different initial conditions \(x_i(0)\) from the interval \([-1,+1]^{2}\) and plot \(\biggl(\EE\{\norm{x_{i}(t)}^{2}\}\biggr)_{t\in\N_0}\), see Figures \ref{fig:x_plot3} and \ref{fig:x_plot5}. Exponential second moment stability is demonstrated for each plant in the NCS under consideration.
	}
	}
\begin{figure}[htbp]
        \includegraphics[scale = 0.4]{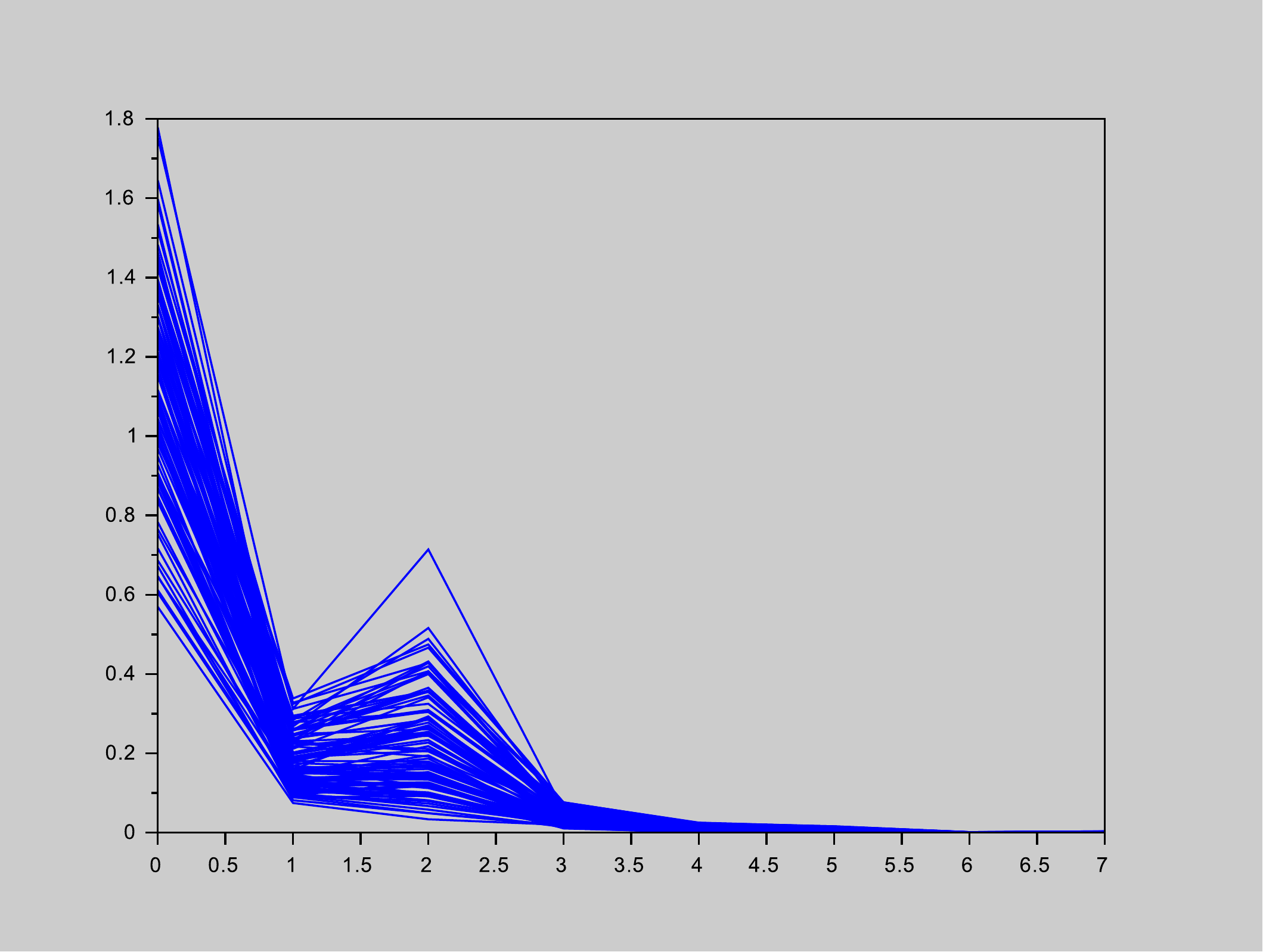}
        \caption{\(\EE\biggl\{\norm{x_{1}(t)}^{2}\biggr\}\) versus \(t\) for Example \ref{ex:num_ex4}}\label{fig:x_plot3}
    \end{figure}
    \begin{figure}[htbp]
        \includegraphics[scale = 0.4]{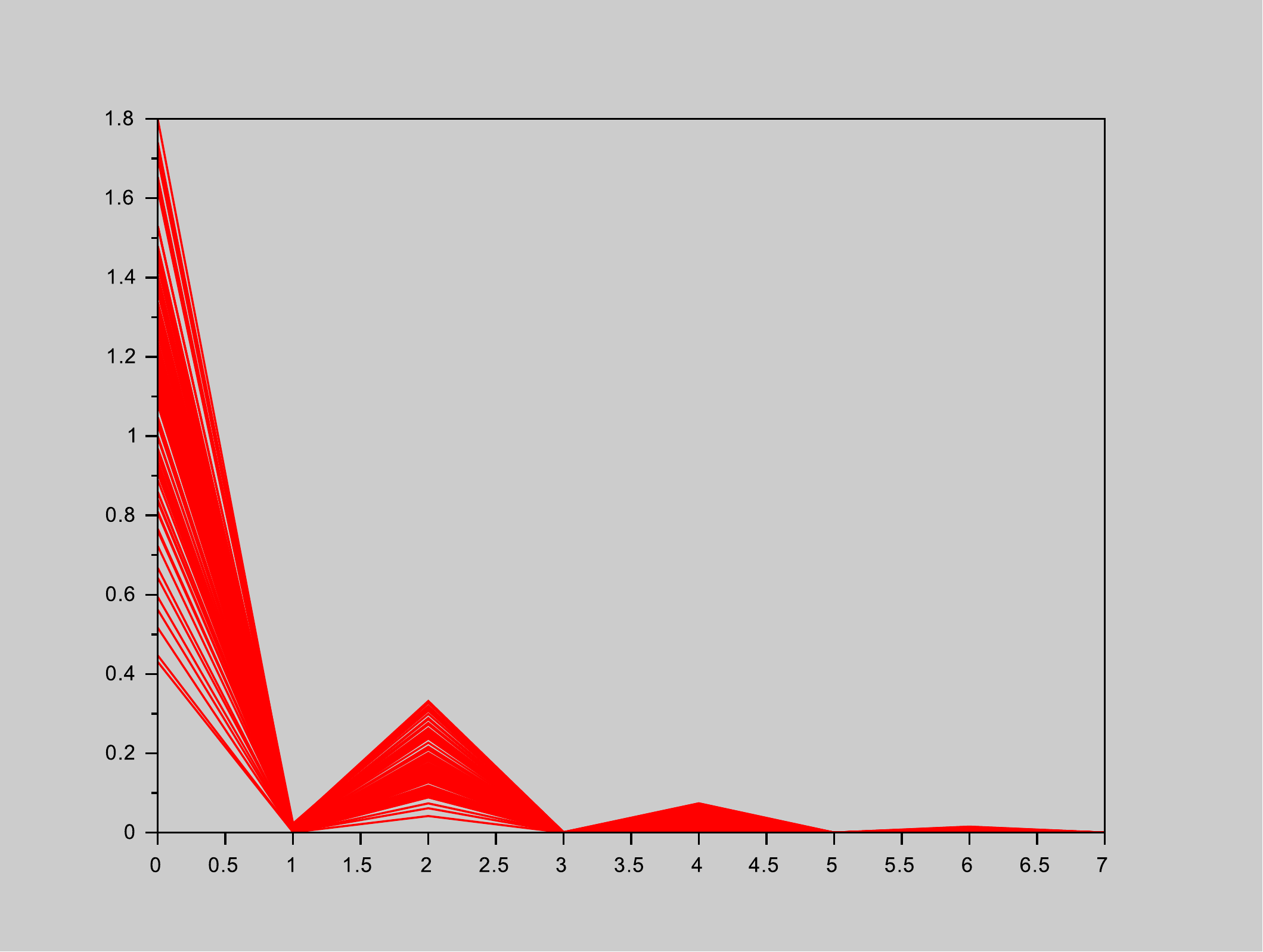}
        \caption{\(\EE\biggl\{\norm{x_{2}(t)}^{2}\biggr\}\) versus \(t\) for Example \ref{ex:num_ex4}}\label{fig:x_plot4}
    \end{figure}
    \begin{figure}[htbp]
        \includegraphics[scale = 0.4]{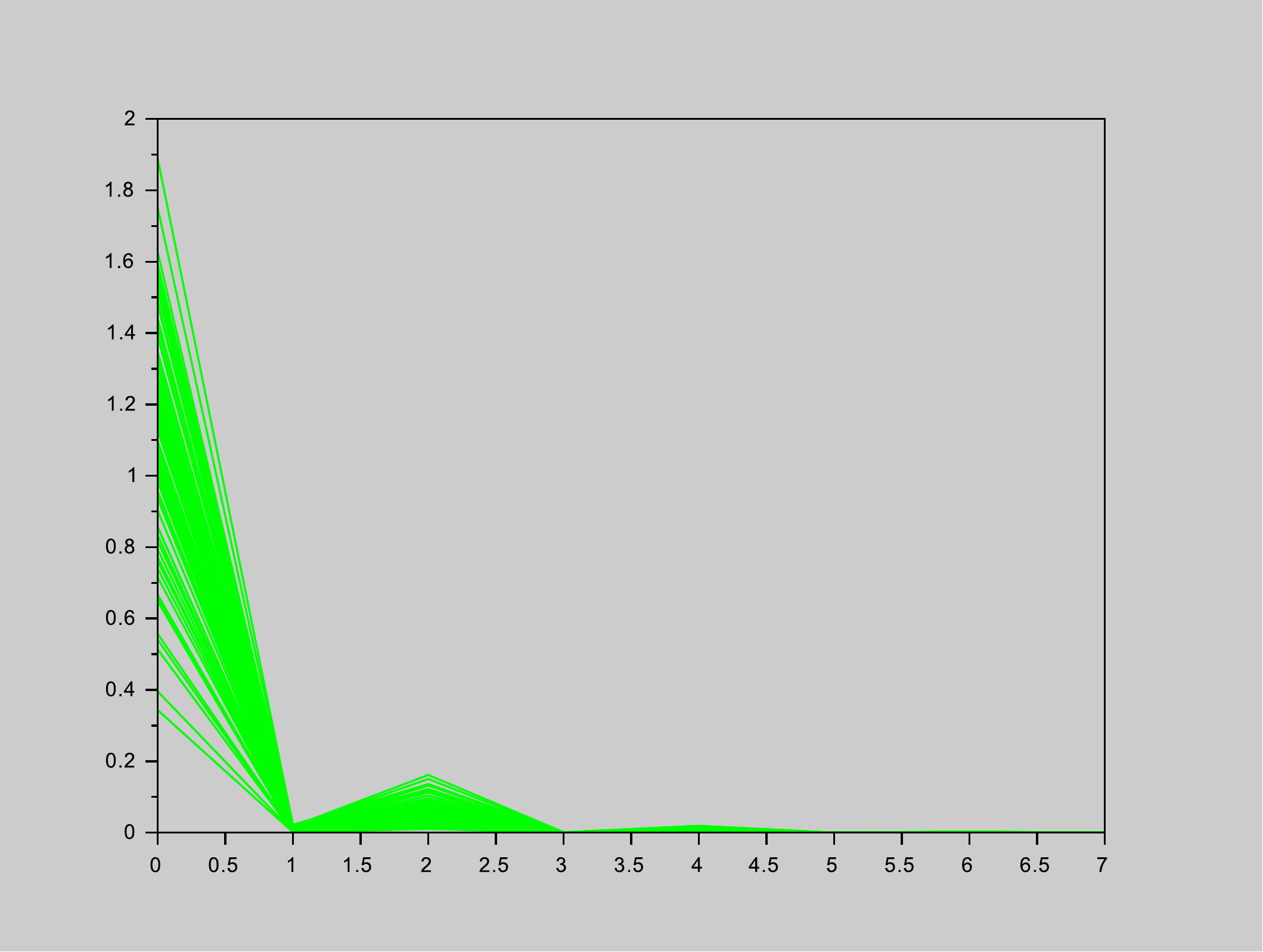}
        \caption{\(\EE\biggl\{\norm{x_{3}(t)}^{2}\biggr\}\) versus \(t\) for Example \ref{ex:num_ex4}}\label{fig:x_plot5}
    \end{figure}
	\end{experiment}
	
	\begin{experiment}
	\label{ex:num_ex3}
	\rm{
		{A key task of Algorithm \ref{algo:sched-policy_design} is solving the feasibility problem \eqref{e:feasprob}. In this experiment we test the worst-case computation time required for solving \eqref{e:feasprob} for all \(i\in\D_q\), \(q=1,2,\ldots,\ell\) in a large-scale setting, i.e., NCSs with a large number of plants. We generate unstable matrices \(A_i\in\R^{5\times 5}\) and vectors \(B_i\in\R^{5\times 1}\) with entries from the interval \([-2,2]\) and the set \(\{0,1\}\), respectively, chosen uniformly at random and ensuring that each pair of matrices \((A_i,B_i)\), \(i=1,2,\ldots,N\) is controllable. The controller matrices, \(K_i\in\R^{1\times 5}\) are computed as a linear quadratic regulator of plant \(i\) with state cost = \(5I_{5\times 5}\) and input cost = 1. We fix \(M = 10\), \(p = 0.5\) and \(\ell = \ell_{\min}\). For all possible choices of \((\D_q)_{q=1}^{\ell}\in\Gamma\), we solve the feasibility problem \eqref{e:feasprob} for all \(i\in\D_q\), \(q=1,2,\ldots,\ell\).
		
		The above procedure is carried out for various large values of \(N\) by employing the LMI solver toolbox in MATLAB R2020a on an Intel 17-8550U, 8 GB RAM, 1 TB HDD PC with Windows 10 operating system. The collected data are summarized in Table \ref{tab:execution_time}. Not surprisingly, we observe that the computation time under consideration increases as the number of plants in an NCS increases. However, since the sets \(\D_q\), \(q=1,2,\ldots,\ell\) are computed offline, a large computation time does not affect the performance of our algorithm. We implement a scheduling sequence, \(\gamma\), by assigning the elements of \(\D_q\), \(q=1,2,\ldots,\ell\) to \(\gamma\) (in order) and repeating the process eternally.}
	}
	\end{experiment}
	\begin{table}[htbp]
	\centering
	\begin{tabular}{|c | c | c|c|c|c|}
		\hline
		\(N\) & \(M\) & \(d\) & \(p\) & \(\ell\) & Time to solve \eqref{e:feasprob} for all \(i\in\D_q\), \(q=1,2,\ldots,\ell\) (in sec) \\
        \hline
        \(100\) & \(10\) & \(5\) & \(0.5\) & \(10\) & 81.5625\\
        \hline
        \(250\) & \(10\) & \(5\) & \(0.5\) & \(25\) & 1274.4141\\
        \hline
        \(500\) & \(10\) & \(5\) & \(0.5\) & \(50\) & 10195.312\\
        \hline
        \(1000\) & \(10\) & \(5\) & \(0.5\) & \(100\) & 81562.5\\
       	\hline
	\end{tabular}
    \vspace*{0.2cm}
	\caption{Data for Experiment \ref{ex:num_ex3}}\label{tab:execution_time}
	\end{table}
\section{Concluding remarks}
\label{s:concln}
    {In this paper we considered NCSs whose communication networks have limited bandwidth and are prone to data losses. We designed stabilizing purely time-dependent periodic scheduling sequences for NCSs  We relied on the existence of subsets of all plants that satisfy certain conditions for this purpose. The proposed stability conditions are necessary and sufficient. We also presented an algorithm to design state-feedback controllers such that the plants and the shared communication network in an NCS together admit the proposed class of stabilizing scheduling sequences. A natural extension of our work is to accommodate other forms of network induced uncertainties such as access delays, quantization errors, etc. in the feedback control loop. This topic is currently under investigation and will be reported elsewhere.}
\section{Proofs of our results}
\label{s:proofs}
\begin{proof}[Proof of Lemma \ref{lem:auxres1}]
        We have
        \begin{align*}
            \Pi_i(t) &=
            \begin{cases}
                \pmat{1-p & p\\1-p & p},&\:\:\text{if \(i\) is an element of \(\gamma(t)\)},\\
                \pmat{0 & 1\\0 & 1},&\:\:\text{if \(i\) is not an element of \(\gamma(t)\)}.
            \end{cases}
        \end{align*}
        In both the cases, the elements of \(\Pi_i(t)\) are non-negative and the elements of each row sum up to \(1\). In addition,
        \begin{align*}
            \Phi_{i0} &=
            \begin{cases}
                \pmat{1-p & p},&\:\:\text{if \(i\) is an element of \(\gamma(0)\)},\\
                \pmat{0 & 1},&\:\:\text{if \(i\) is not an element of \(\gamma(0)\)}.
            \end{cases}
        \end{align*}
        In both the cases, the elements of \(\Phi_{i0}\) are non-negative and their sum is \(1\).

        The assertion of Lemma \ref{lem:auxres1} follows at once.
    \end{proof}

    \begin{proof}[Proof of Lemma \ref{lem:auxres2}]
        Follows from the observation that the conditions \(\gamma(t) = \gamma(t+\ell)\) for all \(t\in\N_0\) and \(\Pi_i(t) = \Pi_i(t+\ell)\) for all \(t\in\N\) is equivalent to the fact that the network access status of any plant \(i\) at time \(t\) is the same as its network access status at time \(t+\ell\) for all \(t\in\N_0\).
    \end{proof}

    \begin{proof}[Proof of Theorem \ref{t:mainres}]
        (Sufficiency) Suppose that there exist distinct sets \(\D_q\subset\{1,2,\ldots,N\}\), \(q = 1,2,\ldots,\ell\), that satisfy conditions \ref{mainprop1}-\ref{mainprop3}. Let \(v_q\in\Svec\) be the vector containing the elements of the set \(\D_q\), \(q=1,2,\ldots,\ell\). Consider a scheduling sequence, \(\gamma\), that obeys
        \begin{align*}
            \gamma(t) &= v_1,\:\:t=0,\ell,2\ell,3\ell,\ldots,\\
            \gamma(t) &= v_2,\:\:t=1,\ell+1,2\ell+1,3\ell+1,\ldots,\\
            \vdots\\
            \gamma(t) &= v_\ell,\:\:t=\ell-1,\ell+(\ell-1),2\ell+(\ell-1),3\ell+(\ell-1),\ldots.
        \end{align*}
        Notice that \(\gamma\) is a well-defined scheduling sequence. Indeed, from \ref{mainprop1}, we have that each \(v_q\), \(q=1,2,\ldots,\ell\) contains \(M\) elements. It follows from \ref{mainprop2} that \(\gamma\) allows each plant \(i\in\{1,2,\ldots,N\}\) to access the shared communication network. In addition, by its construction, \(\gamma\) is periodic with period \(\ell\). We need to show that each plant \(i\in\{1,2,\ldots,N\}\) is ESMS under \(\gamma\).

        In view of Lemma \ref{lem:auxres2}, we have that for each plant \(i\in\{1,2,\ldots,N\}\), the transition probability matrix, \(\Pi_i\), is periodic with period \(\ell\). Fix \(i\in\D_q\), \(q\in\{1,2,\ldots,\ell\}\). By construction of \(\gamma\) and properties of \(\D_q\), the following holds: for some symmetric and positive definite matrices \(Q_{i,1}(j)\), \(Q_{i,2}(j),\ldots\), \(Q_{i,\ell}(j)\), \(j\in R_i\), there exist symmetric and positive definite matrices \(P_{i,1}(j)\), \(P_{i,2}(j),\ldots\), \(P_{i,\ell}(j)\), \(j\in R_i\), such that
        \begin{align}
        \label{e:pf1_step1}
            &\:(1-p)A_k^\top P_{i,\tau+1}(\is)A_k + pA_k^\top P_{i,\tau+1}(\iu)A_k - P_{i,\tau}(k)\nonumber\\
            =&\: p_{k\is}(\tau)A_k^\top P_{i,\tau+1}(\is)A_k + p_{k\iu}(\tau)A_k^\top P_{i,\tau+1}(\iu)A_k - P_{i,\tau}(k)\nonumber\\
            =&\: \sum_{j\in R_i}p_{kj}(\tau)A_k^\top P_{i,\tau+1}(j)A_k - P_{i,\tau}(k)\nonumber\\
            =&\: -Q_{i,\tau}(k),\:\:\text{if}\:i\in\D_{\tau+1},\:\tau=1,2,\ldots,\ell-1,\:k\in R_i,
        \end{align}
        \begin{align}
        \label{e:pf1_step2}
            &\:A_k^\top P_{i,\tau+1}(\iu)A_k - P_{i,\tau}(k)\nonumber\\
            =&\: 0\cdot A_k^\top P_{i,\tau+1}(\is) A_k + 1\cdot A_k^\top P_{i,\tau+1}(\iu)A_k - P_{i,\tau}(k)\nonumber\\
            =&\: p_{k\is}(\tau)A_k^\top P_{i,\tau+1}(\is)A_k + p_{k\iu}(\tau)A_k^\top P_{i,\tau+1}(\iu)A_k - P_{i,\tau}(k)\nonumber\\
            =&\: \sum_{j\in R_i}p_{kj}(\tau)A_k^\top P_{i,\tau+1}(j)A_k - P_{i,\tau}(k)\nonumber\\
            =&\: -Q_{i,\tau}(k),\:\:\text{if}\:i\notin\D_{\tau+1},\:\tau = 1,2,\ldots,\ell-1,\:k\in R_i,
        \end{align}
       \begin{align}
        \label{e:pf1_step3}
            &\:(1-p)A_k^\top P_{i,1}(\is)A_k + pA_k^\top P_{i,1}(\iu)A_k - P_{i,\ell}(k)\nonumber\\
            =&\: p_{k\is}(\ell)A_k^\top P_{i,1}(\is)A_k + p_{k\iu}(\ell)A_k^\top P_{i,1}(\iu)A_k - P_{i,\ell}(k)\nonumber\\
            =&\: \sum_{j\in R_i}p_{kj}(\ell)A_k^\top P_{i,1}(j)A_k - P_{i,\ell}(k)\nonumber\\
            =&\: -Q_{i,\ell}(k),\:\:\text{if}\:i\in\D_{1},\:k\in R_i,
        \end{align}
        and
        \begin{align}
        \label{e:pf1_step4}
            &\:A_k^\top P_{i,1}(\iu)A_k - P_{i,\ell}(k)\nonumber\\
            =&\: 0\cdot A_k^\top P_{i,1}(\is) A_k + 1\cdot A_k^\top P_{i,1}(\iu)A_k - P_{i,\ell}(k)\nonumber\\
            =&\: p_{k\is}(\ell)A_k^\top P_{i,1}(\is)A_k + p_{k\iu}(\ell)A_k^\top P_{i,1}(\iu)A_k - P_{i,\ell}(k)\nonumber\\
            =&\: \sum_{j\in R_i}p_{kj}(\ell)A_k^\top P_{i,1}(j)A_k - P_{i,\ell}(k)\nonumber\\
            =&\: -Q_{i,\ell}(k),\:\:\text{if}\:i\notin\D_{1},\:k\in R_i.
        \end{align}

        In view of Theorem \ref{t:recall_res}, periodicity of \(\Pi_i\) along with \eqref{e:pf1_step1}-\eqref{e:pf1_step4} imply ESMS of plant \(i\) under \(\gamma\). Since \(i\) and \(q\) were chosen arbitrarily, ESMS of all plants \(\displaystyle{i\in\bigcup_{q=1}^{\ell}\D_q}=\{1,2,\ldots,N\}\) follows.

        (Necessity) Consider a scheduling sequence, \(\gamma\), that is periodic with period \(\ell\) and ensures ESMS of all plants \(i\in\{1,2,\ldots,N\}\). We need to show that there exist distinct sets \(\D_q\subset\{1,2,\ldots,N\}\) that satisfy conditions \ref{mainprop1}-\ref{mainprop3}.

        Let us write \(\gamma\) as
        \begin{align*}
            \gamma(t) &= u_1,\:\:t=0,\ell,2\ell,3\ell,\ldots,\\
            \gamma(t) &= u_2,\:\:t=1,\ell+1,2\ell+1,3\ell+1,\ldots,\\
            \vdots\\
            \gamma(t) &= u_{\ell},\:\:t=\ell-1,\ell+(\ell-1),2\ell+(\ell-1),3\ell+(\ell-1),\ldots,
        \end{align*}
        where \(u_q\in\Svec\), \(q=1,2,\ldots,\ell\). Let \(\overline{\D}_q\) be the set that contains the elements of the vector \(u_q\), \(q=1,2,\ldots,\ell\). Clearly, \(\abs{\overline{\D}_q} = M\), \(q=1,2,\ldots,\ell\). Since \(\gamma\) ensures ESMS of each plant \(i\in\{1,2,\ldots,N\}\), it must be true that \(\gamma\) allows each plant \(i\) to access the shared communication network. Indeed, by Assumption \ref{a:stability} the open-loop dynamics of the plants are unstable. It follows that \(\displaystyle{\bigcup_{q=1}^{\ell}\overline{\D}_q = \{1,2,\ldots,N\}}\). In addition, by construction of \(\gamma\), it is periodic with period \(\ell\), thereby ensuring that the sets \(\overline{\D}_q\), \(q=1,2,\ldots,\ell\) are distinct.

        From Lemma \ref{lem:auxres2}, it follows that for each plant \(i\in\{1,2,\ldots,N\}\), the transition probability matrix, \(\Pi_i\), is periodic with period \(\ell\). Fix \(i\in\overline{\D}_q\), \(q\in\{1,2,\ldots,\ell\}\). In view of Theorem \ref{t:recall_res}, ESMS of plant \(i\) implies that for some symmetric and positive definite matrices \(Q_{i,1}(j)\), \(Q_{i,2}(j),\ldots\), \(Q_{i,\ell}(j)\), \(j\in R_i\), there exist symmetric and positive definite matrices \(P_{i,1}(j)\), \(P_{i,2}(j),\ldots\), \(P_{i,\ell}(j)\), \(j\in R_i\), such that conditions \eqref{e:maincondn1}-\eqref{e:maincondn2} hold. By construction of \(\gamma\), we have
        \begin{align}
        \label{e:pf1_step5}
            &\:\sum_{j\in R_i}p_{kj}(\tau)A_k^\top P_{i,\tau+1}(j)A_k - P_{i,\tau}(k)\nonumber\\
            =&\: p_{k\is}(\tau)A_k^\top P_{i,\tau+1}(\is)A_k + p_{k\iu}(\tau)A_{k}^\top P_{i,\tau+1}(\iu)A_k - P_{i,\tau}(k)\nonumber\\
            =&\: (1-p)A_k^\top P_{i,\tau+1}(\is)A_k + pA_k^\top P_{i,\tau+1}(\iu)A_k - P_{i,\tau}(k)\nonumber\\
            =&\: -Q_{i,\tau}(k),\:\:\text{if}\:i\in\D_{\tau+1},\:\tau=1,2,\ldots,\ell-1,\:k\in R_i,
        \end{align}
        \begin{align}
        \label{e:pf1_step6}
            &\:\sum_{j\in R_i}p_{kj}(\tau)A_k^\top P_{i,\tau+1}(j)A_k - P_{i,\tau}(k)\nonumber\\
            =&\: p_{k\is}(\tau)A_k^\top P_{i,\tau+1}(\is)A_k + p_{k\iu}(\tau)A_k^\top P_{i,\tau+1}(\iu)A_k - P_{i,\tau}(k)\nonumber\\
            =&\: 0\cdot A_k^\top P_{i,\tau+1}(\is)A_k + 1\cdot A_k^\top P_{i,\tau+1}(\iu)A_k - P_{i,\tau}(k)\nonumber\\
            =&\: A_k^\top P_{i,\tau+1}(\iu)A_k - P_{i,\tau}(k)\nonumber\\
            =&\: -Q_{i,\tau}(k),\:\:\text{if}\:i\notin\D_{\tau+1},\:\tau = 1,2,\ldots,\ell-1,\:k\in R_i,
        \end{align}
        \begin{align}
        \label{e:pf1_step7}
            &\:\sum_{j\in R_i}p_{kj}(\ell)A_k^\top P_{i,1}(j)A_k - P_{i,\ell}(k)\nonumber\\
            =&\: p_{k\is}(\ell)A_k^\top P_{i,1}(\is)A_k + p_{k\iu}(\ell)A_{k}^\top P_{i,1}(\iu)A_k - P_{i,\ell}(k)\nonumber\\
            =&\: (1-p)A_k^\top P_{i,1}(\is)A_k + pA_k^\top P_{i,1}(\iu)A_k - P_{i,\ell}(k)\nonumber\\
            =&\: -Q_{i,\ell}(k),\:\:\text{if}\:i\in\D_{1},\:k\in R_i,
        \end{align}
        \begin{align}
        \label{e:pf1_step8}
            &\:\sum_{j\in R_i}p_{kj}(\ell)A_k^\top P_{i,1}(j)A_k - P_{i,\ell}(k)\nonumber\\
            =&\: p_{k\is}(\ell)A_k^\top P_{i,1}(\is)A_k + p_{k\iu}(\ell)A_k^\top P_{i,1}(\iu)A_k - P_{i,\ell}(k)\nonumber\\
            =&\: 0\cdot A_k^\top P_{i,1}(\is)A_k + 1\cdot A_k^\top P_{i,1}(\iu)A_k - P_{i,\ell}(k)\nonumber\\
            =&\: A_k^\top P_{i,1}(\iu)A_k - P_{i,\ell}(k)\nonumber\\
            =&\: -Q_{i,\ell}(k),\:\:\text{if}\:i\notin\D_{1},\:k\in R_i.
        \end{align}
        Since \(i\) and \(q\) were chosen arbitrarily, it follows from \eqref{e:pf1_step5}-\eqref{e:pf1_step8} that each element \(i\in\overline{\D}_q\), \(q=1,2,\ldots,\ell\), satisfies conditions \eqref{e:condn1}-\eqref{e:condn4}. We conclude that there exist \(\D_q = \overline{\D}_q\subset\{1,2,\ldots,N\}\), \(q=1,2,\ldots,\ell\), that satisfy conditions \ref{mainprop1}-\ref{mainprop3}.

        This completes our proof of Theorem \ref{t:mainres}.
    \end{proof}

\begin{proof}[Proof of Proposition \ref{prop:algores}]
Our proof of Proposition \ref{prop:algores} will rely on the following:
	\begin{lemma}
	\label{lem:auxres3}
		Consider \((\D_q)_{q=1}^{\ell}\) that satisfies conditions \ref{mainprop1}-\ref{mainprop2}. The following are equivalent:
		\begin{enumerate}[label = \roman*), leftmargin = *]
			\item Each element \(i\in\D_q\), \(q=1,2,\ldots,\ell\), satisfies that for some symmetric and positive definite matrices \(Q_{i,1}(j)\), \(Q_{i,2}(j),\ldots\), \(Q_{i,\ell}(j)\), \(j\in R_i\), there exist symmetric and positive definite matrices \(P_{i,1}(j)\), \(P_{i,2}(j),\ldots\), \(P_{i,\ell}(j)\), \(j\in R_i\), such that conditions \eqref{e:condn1}-\eqref{e:condn4} hold.
			\item The feasibility problem \eqref{e:feasprob} admits a solution to each \(i\in\D_q\), \(q=1,2,\ldots,\ell\).
		\end{enumerate}
	\end{lemma}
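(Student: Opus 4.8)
The plan is to prove the two implications separately, exploiting the fact that the only structural difference between statements i) and ii) is whether the left-hand sides of the Lyapunov-type relations are pinned to a prescribed negative definite matrix $-Q_{i,\tau}(k)$ (statement i)) or are merely required to be negative definite (statement ii)). In other words, the free positive definite matrices $Q_{i,\tau}(k)$ in the equality formulation simply play the role of the slack that the strict inequalities leave unspecified, so the equivalence should follow by passing between ``equals a negative definite matrix'' and ``is negative definite.''

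First I would prove i) $\Rightarrow$ ii). Assume that, for each $i\in\D_q$, $q=1,2,\ldots,\ell$, there exist symmetric positive definite matrices $P_{i,1}(j),\ldots,P_{i,\ell}(j)$, $j\in R_i$, satisfying the equalities \eqref{e:condn1}--\eqref{e:condn4} with symmetric positive definite $Q_{i,\tau}(k)$. Then in each of the four cases the corresponding left-hand side equals $-Q_{i,\tau}(k)\prec 0$. Hence these very same matrices $P_{i,1}(j),\ldots,P_{i,\ell}(j)$ are symmetric, positive definite, and render every left-hand side negative definite; that is, they constitute a feasible point of \eqref{e:feasprob}. Therefore \eqref{e:feasprob} admits a solution for each $i\in\D_q$.

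Next I would prove ii) $\Rightarrow$ i). Suppose \eqref{e:feasprob} admits a solution, so for each $i\in\D_q$ there exist symmetric positive definite $P_{i,1}(j),\ldots,P_{i,\ell}(j)$, $j\in R_i$, making each left-hand side strictly negative definite. I would simply \emph{define} $Q_{i,\tau}(k)$ to be the negative of the corresponding left-hand side; for instance, in the case $i\in\D_{\tau+1}$ set $Q_{i,\tau}(k) := P_{i,\tau}(k) - (1-p)A_k^\top P_{i,\tau+1}(\is)A_k - pA_k^\top P_{i,\tau+1}(\iu)A_k$, and analogously for the three remaining cases matching \eqref{e:condn2}--\eqref{e:condn4}. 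By construction these $Q_{i,\tau}(k)$ satisfy the equalities \eqref{e:condn1}--\eqref{e:condn4} exactly, so only their symmetry and positive definiteness remain to be checked.

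The sole step requiring care — and what I expect to be the only (mild) obstacle — is verifying that the matrices $Q_{i,\tau}(k)$ so defined are genuinely symmetric and positive definite, as the statement demands. Symmetry follows because $P_{i,\tau}(k)$ and the $P_{i,\tau+1}(j)$ are symmetric, so each congruence term $A_k^\top P_{i,\tau+1}(j)A_k$ is symmetric, and hence each left-hand side, being a real linear combination of symmetric matrices, is symmetric; thus $Q_{i,\tau}(k)$ is symmetric. Positive definiteness is then immediate from the strict inequality satisfied by the solution of \eqref{e:feasprob}: since the left-hand side is negative definite, $Q_{i,\tau}(k) = -(\text{left-hand side})\succ 0$. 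This establishes statement i) and completes the equivalence.
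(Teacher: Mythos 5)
Your proposal is correct and follows essentially the same route as the paper's own proof: both directions hinge on the observation that the equalities \eqref{e:condn1}--\eqref{e:condn4} with symmetric positive definite \(Q_{i,\tau}(k)\) are interchangeable with the strict negative definiteness of the corresponding left-hand sides in \eqref{e:feasprob}, with \(Q_{i,\tau}(k)\) defined as the negation of the left-hand side in the converse direction. Your write-up is merely more explicit than the paper's (it spells out the symmetry check, which the paper leaves implicit), but the argument is identical.
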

	
	\begin{proof}
		i)\(\implies\)ii): Since the matrices \(Q_{i,1}(j)\), \(Q_{i,2}(j),\ldots\), \(Q_{i,\ell}(j)\), \(j\in R_i\), are symmetric and positive definite, the expressions on the left-hand side of the equalities \eqref{e:condn1}-\eqref{e:condn4} must be symmetric and negative definite matrices.\\
		ii)\(\implies\)i):  We have that the expressions on the left-hand side of the equalities \eqref{e:condn1}-\eqref{e:condn4} are symmetric and negative definite matrices. Then their negations are symmetric and positive definite matrices.
		
		The assertion of Lemma \ref{lem:auxres3} follows at once.
	\end{proof}

    \begin{proof}[Proof of Proposition \ref{prop:algores}]
    	\begin{enumerate}[label = \roman*), leftmargin =*]
    		\item  In view of Definition \ref{d:periodic-sequence}, a \(\gamma\) obtained from Algorithm \ref{algo:sched-policy_design} is, by construction, periodic with period \(\ell\). It remains to show that \(\gamma\) is stabilizing.
	
	By Lemma \ref{lem:auxres3}, the choice of \((\D_q)_{q=1}^{\ell}\) employed to design \(\gamma\) in Algorithm \ref{algo:sched-policy_design} satisfies conditions \ref{mainprop1}-\ref{mainprop3}. From Theorem \ref{t:mainres}, it follows that the existence of \((\D_q)_{q=1}^{\ell}\) that satisfies conditions \ref{mainprop1}-\ref{mainprop3} implies the existence of a periodic scheduling sequence with period \(\ell\) that ensures ESMS of plant \(i\in\{1,2,\ldots,N\}\). The fact that \(\gamma\) constructed in Algorithm \ref{algo:sched-policy_design} is one such sequence, is shown mathematically in our proof of Theorem \ref{t:mainres} (sufficiency part).
	
	\item A failure message obtained from Algorithm \ref{algo:sched-policy_design} implies that for every \((\D_q)_{q=1}^{\ell}\) that satisfies conditions \ref{mainprop1}-\ref{mainprop2}, there exists at least one \(i\in\D_q\), \(q\in\{1,2,\ldots,\ell\}\) such that the feasibility problem \eqref{e:feasprob} does not admit a solution. In view of Lemma \ref{lem:auxres3}, we have that the above is equivalent to the non-existence of \((\D_q)_{q=1}^{\ell}\) that satisfies \ref{mainprop1}-\ref{mainprop3}. From Theorem \ref{t:mainres}, it follows that the NCS under consideration does not admit a periodic scheduling sequence with period \(\ell\) that ensures ESMS of each plant \(i\in\{1,2,\ldots,N\}\).
	\end{enumerate}
	This completes our proof of Proposition \ref{prop:algores}.
    \end{proof}
\end{proof}

 {
    \begin{proof}[Proof of Proposition \ref{prop:mainres}]
    	We need to show that if there exists \((\D_q)_{q=1}^{\ell}\in\Gamma\) such that the feasibility problems \eqref{e:feasprob1}-\eqref{e:feasprob2} admit solutions for each \(i\in\{1,2,\ldots,N\}\), then there exist symmetric and positive definite matrices \(P_{i,1}(j),\ldots\), \(P_{i,\ell}(j)\), \(j\in R_i\) such that the following conditions hold:
	\begin{align}
		\label{e:pf3_step1}(1-p)A_{\iu}^\top P_{i,\tau+1}(\is)A_{\iu} + pA_{\iu}^\top P_{i,\tau+1}(\iu)A_{\iu} - P_{i,\tau}(\iu) &\prec 0,\:\:\text{if}\:i\in\D_{\tau+1},\:\tau=1,2,\ldots,\ell-1,\\
		\label{e:pf3_step2}(1-p)A_{\is}^\top P_{i,\tau+1}(\is)A_{\is} + pA_{\is}^\top P_{i,\tau+1}(\iu)A_{\is} - P_{i,\tau}(\is) &\prec 0,\:\:\text{if}\:i\in\D_{\tau+1},\:\tau=1,2,\ldots,\ell-1,\\
                 \label{e:pf3_step3}A_{\iu}^\top P_{i,\tau+1}(\iu)A_{\iu} - P_{i,\tau}(\iu) &\prec 0,\:\:\text{if}\:i\notin\D_{\tau+1},\:\tau = 1,2,\ldots,\ell-1,\\
                 \label{e:pf3_step4}A_{\is}^\top P_{i,\tau+1}(\iu)A_{\is} - P_{i,\tau}(\is) &\prec 0,\:\:\text{if}\:i\notin\D_{\tau+1},\:\tau = 1,2,\ldots,\ell-1,\\
                 \label{e:pf3_step5}(1-p)A_{\iu}^\top P_{i,1}(\is)A_{\iu} + pA_{\iu}^\top P_{i,1}(\iu)A_{\iu} - P_{i,\ell}(\iu) &\prec 0,\:\:\text{if}\:i\in\D_1,\\
                 \label{e:pf3_step6}(1-p)A_{\is}^\top P_{i,1}(\is)A_{\is} + pA_{\is}^\top P_{i,1}(\iu)A_{\is} - P_{i,\ell}(\is) &\prec 0,\:\:\text{if}\:i\in\D_1,\\
                 \label{e:pf3_step7}A_{\iu}^\top P_{i,1}(\iu)A_{\iu} - P_{i,\ell}(\iu) &\prec 0,\:\:\text{if}\:i\notin\D_1,\\
                \label{e:pf3_step8}A_{\is}^\top P_{i,1}(\iu)A_{\is} - P_{i,\ell}(\is) &\prec 0,\:\:\text{if}\:i\notin\D_1.
	\end{align}
	
	From \eqref{e:feasprob1} it follows that there exist symmetric and positive definite matrices \(P_{i,1}(\iu),\ldots\), \(P_{i,\ell}(\iu)\) and \(P_{i,q}(\is)\), where \(i\in\D_q\), \(q\in\{1,2,\ldots,\ell\}\) such that \eqref{e:pf3_step1}, \eqref{e:pf3_step3}, \eqref{e:pf3_step5} and \eqref{e:pf3_step7} are satisfied. Now, consider that there exist \(q\in\{1,2,\ldots,\ell\}\) and \(Y_i\) such that \eqref{e:feasprob2} admits a solution. By Schur complement \cite[Definition 6.1.8]{Bernstein}, the inequalities in \eqref{e:feasprob2} are equivalent to
	\[
		\pmat{-P_{i,q+1}^{-1}(\iu) & A_i P_{i,q}^{-1}(\is)+B_i Y_i\\\star & -P_{i,q}^{-1}(\is)}\prec 0
	\]
	 \[
		\text{(resp.,}\:\:\pmat{-\bigl((1-p)P_{i,q+1}(\is)+p P_{i,q+1}(\iu)\bigr)^{-1} & A_i P_{i,q}^{-1}(\is)+B_i Y_i\\\star & -P_{i,q}^{-1}(\is)}\prec 0\:\:).
	\]
	The above expression can be written as
	\[
		\diag\bigl(P_{i,q+1}^{-1}(\iu),P_{i,q}^{-1}(\is)\bigr)^\top\pmat{-P_{i,q+1}(\iu) & P_{i,q+1}(\iu)(A_i+B_i K_i)\\\star & -P_{i,q}(\is)}\diag\bigl(P_{i,q+1}^{-1}(\iu),P_{i,q}^{-1}(\is)\bigr)\prec 0
	\]
	\begin{align*}
		\text{(resp.,}\:\:&\diag\biggl(\bigl((1-p)P_{i,q+1}(\is)+pP_{i,q+1}(\iu)\bigr)^{-1},P_{i,q}^{-1}(\is)\biggr)^\top\times\\
		&\pmat{-\bigl((1-p)P_{i,q+1}(\is)+pP_{i,q+1}(\iu)\bigr) & \bigl((1-p)P_{i,q+1}(\is)+pP_{i,q+1}(\iu)\bigr)(A_i+B_i K_i)\\\star & -P_{i,q}(\is)}\times\\
		&\diag\biggl(\bigl((1-p)P_{i,q+1}(\is)+pP_{i,q+1}(\iu)\bigr)^{-1},P_{i,q}^{-1}(\is)\biggr)\prec 0\:\:),
	\end{align*}
	where \(K_i\) is described in Step 9. of Algorithm \ref{algo:contrl_design}. Since the left-hand side of the above inequality is a congruent transformation \cite[Definition 3.4.4]{Bernstein} of \(\pmat{-P_{i,q+1}(\iu) & P_{i,q+1}(\iu)(A_i+B_i K_i)\\\star & -P_{i,q}(\is)}\) (resp., \\\(\pmat{-\bigl((1-p)P_{i,q+1}(\is)+pP_{i,q+1}(\iu)\bigr) & \bigl((1-p)P_{i,q+1}(\is)+pP_{i,q+1}(\iu)\bigr)(A_i+B_i K_i)\\\star & -P_{i,q}(\is)}\)), it holds that\\
	\(\pmat{-P_{i,q+1}(\iu) & P_{i,q+1}(\iu)(A_i+B_i K_i)\\\star & -P_{i,q}(\is)}\prec 0\) (resp.,\\ \(\pmat{-\bigl((1-p)P_{i,q+1}(\is)+pP_{i,q+1}(\iu)\bigr) & \bigl((1-p)P_{i,q+1}(\is)+pP_{i,q+1}(\iu)\bigr)(A_i+B_i K_i)\\\star & -P_{i,q}(\is)}\prec 0\)). By Schur complement, the above inequality is the same as
	\[
		(A_i+B_i K_i)^\top P_{i,q+1}(\iu)(A_i+B_iK_i)-P_{i,q}(\is)\prec 0
	\]
	\[
		\text{(resp.,}\:\:(A_i+B_i K_i)^\top \bigl((1-p)P_{i,q+1}(\is)+pP_{i,q+1}(\iu)\bigr)(A_i+B_iK_i)-P_{i,q}(\is)\prec 0\:\:).
	\]
	From \eqref{e:feasprob3} we have that there exist symmetric and positive definite matrices \(P_{i,1}(\is),\ldots\), \(P_{i,q-1}(\is)\), \(P_{i,q+1}(\is),\ldots\), \(P_{i,\ell}(\is)\) which together with \(P_{i,q}(\is)\), \(i\in\D_q\), \(q\in\{1,2,\ldots,\ell\}\) obtained above satisfy \eqref{e:pf3_step2}, \eqref{e:pf3_step4}, \eqref{e:pf3_step6} and \eqref{e:pf3_step8}.
	
	We conclude that the feasibility problem \eqref{e:feasprob} admits a solution \(P_{i,1}(j),\ldots\), \(P_{i,\ell}(j)\), \(j\in R_i\), \(i=1,2,\ldots,N\). This completes our proof of Proposition \ref{prop:mainres}.
    \end{proof}
    }



\end{document}